\newcommand{\M}[1]{\mathbb{M}^{#1}}
\newcommand{\N}[1]{\mathbb{N}^{#1}}
\newcommand{\R}[1]{\mathbb{R}^{#1}}
\newcommand{\bb}{\boldsymbol{d}}
\newcommand{\be}{\boldsymbol{e}}
\newcommand{\bE}{\boldsymbol{E}}
\newcommand{\bef}{\boldsymbol{f}}
\newcommand{\bF}{\boldsymbol{F}}
\newcommand{\bg}{\boldsymbol{g}}
\newcommand{\bh}{\boldsymbol{h}}
\newcommand{\bm}{\boldsymbol{m}}
\newcommand{\bM}{\boldsymbol{M}}
\newcommand{\bn}{\boldsymbol{n}}
\newcommand{\bs}{\boldsymbol{s}}
\newcommand{\bt}{\boldsymbol{t}}
\newcommand{\bu}{\boldsymbol{u}}
\newcommand{\bv}{\boldsymbol{v}}
\newcommand{\bx}{\boldsymbol{x}}
\newcommand{\bzero}{\boldsymbol{0}}
\newcommand{\cC}{\mathcal C}
\newcommand{\cF}{\mathcal F}
\newcommand{\cM}{\mathcal M}
\newcommand{\cO}{\mathcal O}
\newcommand{\cR}{\mathcal R}
\newcommand{\cS}{\mathcal S}
\newcommand{\cU}{\mathcal U}
\newcommand{\cV}{\mathcal V}
\newcommand{\cZ}{\mathcal Z}
\newcommand{\fg}{\mathfrak g}
\newcommand{\fh}{\mathfrak h}
\newcommand{\fz}{\mathfrak z}
\newcommand{\de}{\mathrm d}
\newcommand{\SE}[1]{\mathrm{SE}(#1)}
\providecommand{\abs}[1]{\left\lvert#1\right\rvert}
\newcommand{\sign}{\operatorname{sign}}
\DeclareMathOperator{\spann}{span}
\DeclareMathOperator{\rank}{rank}
\renewcommand{\geq}{\geqslant}
\renewcommand{\leq}{\leqslant}
\newcommand{\average}{{\mathchoice {\kern1ex\vcenter{\hrule
height.4pt width 8pt depth0pt}
\kern-11pt} {\kern1ex\vcenter{\hrule height.4pt width 4.3pt
depth0pt} \kern-7pt} {} {} }}
\mathchardef\emptyset="001F
\providecommand{\U}[1]{\protect\rule{.1in}{.1in}}
\numberwithin{equation}{section}
\newtheorem{definition}{Definition}[section]
\newtheorem{theorem}[definition]{Theorem}
\newtheorem{proposition}[definition]{Proposition}
\newtheorem{corollary}[definition]{Corollary}
\newtheorem{example}[definition]{Example}
\theoremstyle{definition} {\newtheorem{remark}[definition]{Remark}}
\title[Gait controllability of length-changing  slender microswimmers]{Gait controllability of length-changing slender microswimmers}%
\author[P.~Gidoni]{Paolo Gidoni}
\address[P.~Gidoni]{Dipartimento Politecnico di Ingegneria e Architettura, Università  degli Studi di Udine, Via delle Scienze 206, 33100 Udine, Italy}
\email{paolo.gidoni@uniud.it}
\author[M.~Morandotti]{Marco Morandotti}
\address[M.~Morandotti]{Dipartimento di Scienze Matematiche ``G.~L.~Lagrange'', Politecnico di Torino, Corso Duca degli Abruzzi, 24, 10129 Torino, Italy}
\email{marco.morandotti@polito.it}
\author[M.~Zoppello]{Marta Zoppello}
\address[M.~Zoppello]{Dipartimento di Scienze Matematiche ``G.~L.~Lagrange'', Politecnico di Torino, Corso Duca degli Abruzzi, 24, 10129 Torino, Italy}
\email{marta.zoppello@polito.it}
\date{\today}
\subjclass[2020]{76Z10;	(70Q05, 93B05).  	
}
\keywords{motion in viscous fluids, fluid-solid interaction, micro-swimmers, resistive force theory, controllability.}
\begin{document}

\begin{abstract}
Controllability results of four models of two-link microscale swimmers that are able to change the length of their links are obtained. The problems are formulated in the framework of Geometric Control Theory, within which the notions of fiber, total, and gait controllability are presented, together with sufficient conditions for the latter two.
The dynamics of a general two-link swimmer is described by resorting to Resistive Force Theory and different mechanisms to produce a length-change in the links, namely, active deformation, a sliding hinge, growth at the tip, and telescopic links. Total controllability is proved via gait controllability in all four cases, and illustrated with the aid of numerical simulations.
\end{abstract}

\maketitle

%\tableofcontents

%%%%%%%%%%%%%%%%%%%%%%%%%%%%%%%%%%%%%%%%%%%%%%%%%%%%%%%%%
%%%%%%%%%%%%%%%%%%%%%%%%%%%%%%%%%%%%%%%%%%%%%%%%%%%%%%%%%
%%%%%%%%%%%%%%%%%%%%%%%%%%%%%%%%%%%%%%%%%%%%%%%%%%%%%%%%%
%%%%%%%%%%%%%%%%%%%%%%%%%%%%%%%%%%%%%%%%%%%%%%%%%%%%%%%%%
%%%%%%%%%%%%%%%%%%%%%%%%%%%%%%%%%%%%%%%%%%%%%%%%%%%%%%%%%

\section{Introduction}

	In the landscape of mathematical models for swimming micro-organisms, much attention is devoted to slender swimmers, since many living beings have a shape that can be approximated by a filament with sufficient accuracy (including, for instance, \emph{C.~elegans} and spermatozoa).
	Models for swimming are particularly useful for the design of Nature-inspired robots, which necessarily undergo simplifications and reductions in the available degrees of freedom. A relevant issue is therefore the search for \emph{minimal} models, involving a small number of parameters, but still describing the essential features of the swimmer, including its ability to successfully move and reach a given final position.
	
	Clearly, an oversimplification might be deleterious to the swimmer capabilities; such risk is even more compelling at microscale, due to time-reversibility of low Reynolds number fluidodynamics.
	The celebrated example of Purcell's Scallop Theorem \cite{Purcell77} addresses precisely this situation: a ``scallop'' consists of two equal segments hinged together at one of their extremities, with the ability to change in time the angle between the segments (this models the opening and closing its ``valves'').  If  this is an effective strategy for rectilinear locomotion when inertial forces are relevant in the dynamics of the fluid, such a scallop cannot swim at low Reynolds number, where viscous forces are much stronger than inertial ones and the flow is time-reversible: after any periodic opening and closing of the valves, the scallop returns to its initial position.
	
	The lesson of the Scallop Theorem is that the symmetry of the reciprocal motion of opening and closing the valves must be broken to achieve non-zero net displacement. A first solution was provided by Purcell's swimmer~\cite{Purcell77}, consisting of three concatenated links, with the ability to change independently the angles between each couple of adjacent links. If two independent controlled parameters are enough to avoid reciprocal motion and properly change position, the question becomes whether they are also sufficient to reach any desired position.
	
Before continuing, let us clarify our framework. By \emph{swimming}, we intend the ability to achieve a non-zero displacement from an initial position by changing shape, only relying on hydrodynamic reaction forces. In this paper, the latter will be described through Resistive Force Theory, which is a valid approximation of Stokes equations for a slender filament~\cite{GH1955,Lauga_book}.  
When building a model for swimming, it is therefore helpful to distinguish between \emph{shape variables}~$s$ and \emph{position variables}~$g$: the state of the swimmer is then described by the pair $(s,g)\in\cZ$, where $\cZ$ is a suitable manifold. For example, for planar locomotion of Purcell's swimmer, the center of the middle link and its orientation are the position variables and the angles that the lateral links form with the middle one are the two shape parameters. 
Swimming consists of obtaining a change in the~$g$ variables as a consequence of a change in the~$s$ variables. If the shape change is periodic, it is usually referred to as a \emph{gait}. 
Locomotion can be therefore studied as a \emph{controllability} problem, namely, establishing whether it is possible to reach a given final state from a given initial one, an issue  successfully addressed by a discipline called Geometric Control Theory (see~\cite{LibroAgrachev,LibroCoron,LibroJurdjevic} and the references therein for a comprehensive introduction).

In this setting, each control parameter corresponds to a vector field on the state manifold $\cZ$. Remarkably, to achieve controllability it is not necessary to span all possible directions of the tangent bundle of $\cZ$ with the control vector fields. Indeed, minimal swimmers are endowed of fewer control parameters than the dimension of the state manifold $\cZ$: controls are naturally associated with rates of change of the shape variables, which are fewer than the dimension of the state manifold. As we will see in our examples, often the minimal number of control parameters is even less than the position variables $g$.
The reason is that a particular concatenation of motions in the available directions might result in a net motion along a direction which is not directly accessible. 
The formal way to state this is that the Lie brackets of the available vector fields for the motion generate the missing directions and allow the swimmer to move to the desired final state.

In this framework, Purcell's Scallop Theorem is immediate: since the only shape variable is the angle and since the Lie bracket of a vector field with itself vanishes, no non-zero net motion is possible for an idealized scallop performing a reciprocal motion. 
On the contrary, the Lie brackets between the two control vector fields associated with the two (velocities of the) angles in the three-link swimmer do not vanish and allow it to displace.

It is now clear that symmetry breaking is intimately related to the event that the control vector fields available for the motion and their Lie brackets (possibly of higher order, i.e., nested Lie brackets) provide a sufficiently rich ensemble of linearly independent vectors in the ambient space.
To obtain this, there are various strategies, such as adding more shape parameters that can be actuated independently, so each of them is related to a control vector field: this is the case of Purcell's three link swimmer \cite{Purcell77}, of the $N$-link swimmer \cite{ADSGZ,GMZ,MMSZ}), and when the hydrodynamic interaction between two (possibly non-controllable) swimmers is exploited \cite{AZN,MKL,ZMBG}. 
Other symmetry-breaking mechanisms are based instead on hysteresis phenomena occurring during large deformations, see, e.g.,~\cite{BM2024,CDN2023,KoensAl,MourranAl17,MourranAl21, RehorAl21}.

In this paper, we propose four models for an enhanced two-link swimmer to achieve controllability, thus overcoming the Scallop Theorem in the context of microswimming.
To present our results, we discuss different concepts of controllability, such as total controllability (to prescribe the initial and final positions and shapes) and fiber controllability (to prescribe the initial position and shape and the final position). In particular, we focus on the notion of \emph{gait controllability}, an intermediate one (in terms of strength), which we argue to be the most apt for the task. With gait controllability (see Definition~\ref{PC} below) we mean that we prescribe the initial and final position and the \emph{same} initial and final shape. In this way, the feature of a periodic shape change becomes embedded in the structure of the problem, resulting in a more fitting treatment of some situations, see Remarks~\ref{rem:GCTC} and~\ref{rem:GCFC}. Moreover, on a more technical perspective, gait controllability is usually the gateway through which the stronger total controllability is proven, making an independent notion even more useful.

Classical models of slender microswimmers are based on the capability to control curvature or angles along the swimmer body: in addition to the above-mentioned $N$-link swimmers, we recall travelling waves \cite{Taylor1951} or the rotation of a corkscrew flagellum \cite{KTSKS,OAR,SG2012}. The possibility for the swimmer to change the size of some body parts has been considered, for example, for a volume change in a model of two-sphere swimmer \cite{AKO05,ECS2023,Wang2019};  here we study instead the elongation and shortening of a slender filament.
More precisely, we enhance the basic ``non-controllable'' two-link swimmer by enabling it to change the lengths of each of its links, thus endowing it with stronger controllability properties. 

While a change in curvature has a trivial interpretation in terms of relative displacement of the material points of the swimmer, the same is no longer true for a length-change in a filament, which can be produced, for example, by a stretching of the filament, or by tip-localized growth, or even by the protrusion of a telescopic element. A different relative motion of the material points of the swimmers implies that also their velocities are different (both the absolute and the relative ones), leading to different resulting viscous forces associated with the motion in the fluid. In order to highlight the role of the material structure on the dynamics of the swimmer, we consider four alternative mechanisms to produce a length-change in the two links of a two-link swimmer.

The first length-change mechanism, \emph{stretching links} (Section~\ref{sec:stretch}), consists of an active longitudinal strain along each link of the swimmer. This mechanism is inspired by soft active materials, such as hydrogels \cite{MourranAl17,SharanAl21} and magnetosensitive elastomers \cite{HLMS}, which have been receiving an increasing attention in the design of microscale robots \cite{MedAl}. An active strain can be employed to produce a shape change in many ways, such as a shrinking/swelling of the body or variations of the natural curvature in bilayer materials or in coated hydrogels \cite{MourranAl17,MourranAl21}.
Here we consider one of the simplest possibilities, namely a uniform longitudinal deformation of a straight slender filament, producing a change in the length of the link.
Notice however that, in general, the locomotion in a fluid of microbots made of active materials might involve other phenomena in addition to shape-change-based swimming, such as jet propulsion \cite{TanAl} or complex interaction with a substrate when moving in planar environment \cite{RehorAl21,KropacekAl},
which we do not include in our investigation.

The other three mechanisms considered in our paper are inspired by other shape-change strategies employed in robotics, although not yet at the microscopic scale.
The \emph{sliding links} model of Section~\ref{sec:sliding} is inspired by isoperimetric robots~\cite{UHSOHF}. 
The sum of the lengths of the two links is constant and shape change is produced by the filament sliding through the hinge. 
Although the macroscopic effect is a change in the length of each link, this mechanism can be more properly seen as the limit case of a filament with active curvature, with the opening and closing of the two links being produced by changing the curvature near a point in the filament, adding to the device the possibility to change the point of activation smoothly in time. 

The \emph{growing links} model of Section~\ref{sec:growing} is inspired by vine robots~\cite{BCHOH} and root robots~\cite{SMM17}. There is no active deformation along each link, neither in length nor in curvature, and elongation is produced by new material points appearing at the tips of the filament. 
Notice that this does not imply that a new part of the body is generated: for instance, one can consider, as in vine robots, a thin tubular filament  with the yet-to-appear section being folded inside the visible one and being gradually everted during elongation. Since, for our purposes, only points in contact with the fluid are relevant, eversion is equivalent to growth of the body.  Shortening happens in the reverse way, with the material points at the tip gradually being retracted and thus disappearing from our description of the model. 

The growing links model serves also to introduce our last mechanism, the \emph{telescopic links} model of Section~\ref{sec:telescopic}, in which each link works as a telescopic pole with two sections one sliding inside the other one, with the outer section being the one closer to the hinge. 

The points of the inner section which, at a given time, are located inside the outer section are not in contact with the fluid (and do not affect the shape of the swimmer), so they are not accounted for in our description of the body at that time. As in the previous model, therefore, elongation corresponds to a change in the reference configuration, with more points being added to the body. The difference is where such points are added. In the \emph{growing links} model this happened at the tip of the link; in the \emph{telescopic links} model instead it is mid-link, at the joint between the two sections,  with new material points being added to the inner section on its junction-end, as it is protruded from the outer one.

The paper is organized as follows: after reviewing Resistive Force Theory and the bases of Geometric Control Theory, we introduce and compare the different notions of controllability.  We give a sufficient condition for gait controllability (Theorem~\ref{th:gaitcontrollable}) and, in Proposition~\ref{PCimpliesTC}, we prove that gait controllability and total controllability of the shape imply total controllability of the whole system. 
In Section~\ref{sec:generalmodel}, we introduce a general mechanical description of the swimmer and interpret it as a control system.
In Section~\ref{sec:esempi}, we present the three (plus one) models of a two-link swimmer with length-changing links, and prove the gait and total controllability of each of them.
Finally, in Section~\ref{sec_num_disc}, we include some numerical simulations showing concatenations of Lie brackets that generate both translation and rotation of the swimmers, and offer a summary of the research.

%%%%%%%%%%%%%%%%%%%%%%%%%%%%%%%%%%%%%%%%%%%%%%%%%%%%%%%%%%%%%%%%%%%%%%%

\section{Gait controllability} \label{sec:control}
We study the controllability of locomotion systems whose state is identified by a variable $s$ describing the (time-dependent) shape of the locomotor and a variable $g$ describing its position. Thus, we take $s\in \cS$, where the shape space $\cS$ is a suitable parallelizable $n$-dimensional manifold, and $g\in G$, where the position space $G$ is a connected $d$-dimensional Lie group with corresponding Lie algebra $\fg$ of dimension~$d$, whose elements are represented by $d\times d$ matrices. We recall that an $n$-dimensional manifold is parallelizable if there exist~$n$ smooth vector fields on the manifold defining a basis of the tangent space at each point.
 
The dynamics of the system is assumed to be of the form
\begin{equation}\label{system}
	\begin{pmatrix}
	\dot s\\\dot g\end{pmatrix}=
	\begin{pmatrix}
	f(s,u)\\g\xi(s,u)\end{pmatrix}\qquad\text{with}\quad f(s,u)=F(s)u\quad\text{and}\quad \xi(s,u)=\sum_{i=1}^m \xi_i(s)u_i\,,
\end{equation}
where $F\colon \cS\to\R{n\times m}$ 
and $\xi_i\colon\cS\to\fg$ are nonlinear 
smooth functions and $u=(u_1,\dots,u_m)$  is the vector of the controls. We denote with $F_i(s)\in T_s\cS$ the columns of the matrix $F(s)$, where $T_s\cS\simeq \R{n}$ is the tangent space to $\cS$ at $s$.
The controlled ODE \eqref{system} can be written in matrix form
\begin{equation}\label{system_matrix}
	\dot z=Z(z) u=\sum_{i=1}^m Z_i(s,g) u_i\,,
\end{equation}
where we have posed $z=(s,g)^\top\in\cS\times G\eqqcolon\cZ$, and where $Z_i$ is the $i$-th column of the matrix $Z$. 
Notice that, upon isomorphism, the element $g\in G$ can be identified with a vector in~$\R{d}$, so that $Z\in\R{(n+d)\times m}$ takes the form 
$$Z=\begin{pmatrix}
	Z^{\cS}\\ Z^G
\end{pmatrix},$$
where the $i$-th column $Z^{\cS}_i$ is $F_i$ and the $i$-th column $Z^G_i$ is the vector representation of the matrix~$g\xi_i$\,.
Owing to the smoothness of the maps~$F_i$ and~$\xi_i$\,, by classical results (see, e.g., \cite{Hale}), there exists a unique solution to the Cauchy problem associated with \eqref{system_matrix}, namely
\begin{equation}\label{system_matrix_CP}
	\begin{cases}
	\dot z=Z(z)u,\\
	z(0)=z^\circ,
	\end{cases}
\end{equation}
for every initial condition $z^\circ=(s^\circ,g^\circ)\in\R{n+d}$, provided that the control maps $t\mapsto u(t)$ are measurable.\\
For systems as \eqref{system}, we recall the notion of equivariance.

\begin{definition}\label{equiVF}
	Let $g\in G$. 
	A vector field $X$ on $\cZ$ is said to be \emph{equivariant} with respect to the group action 
	\begin{equation}\label{eq:groupaction}
		\Psi_g\colon \cZ \to \cZ, \qquad z=(s,g')^\top\mapsto \Psi_g(z)\coloneqq (s,gg')^\top
	\end{equation}
if, denoting by $(\cdot)_*$ the push-forward, it holds
\begin{equation*}
	(\Psi_g)_*X(z)=X(\Psi_g (z)).
\end{equation*}
\end{definition}
In practical terms, a vector field is equivariant if its $G$-component $X^G$ can be expressed in the specific factored form $X^G=g\Xi(s)$, that is, the left-translation by $g$ of $\Xi(s)$.

From the point of view of control theory, it is crucial to identify suitable notions of controllability, loosely speaking the ability to steer the system between two given initial and final configurations.
The most classical notion involves moving the whole state of the system $z=(s,g)$ from any given initial state~$z^\circ$ to any given final state~$z^\bullet$.
\begin{definition}[total controllability]\label{TC}
	System \eqref{system} is \emph{totally controllable} if for any given initial and final configurations $z^\circ$ and $z^\bullet$ there exist a time $T>0$ and a control function $u\in L^\infty([0,T];\R{m})$ such that the unique solution to \eqref{system_matrix_CP} with control~$u$ satisfies $z(T)=z^\bullet$.
\end{definition}
In many relevant models, total controllability turns out to be too strict of a notion, so that the weaker concept of fiber controllability has been introduced \cite{KM}. 
\begin{definition}[fiber controllability]\label{FC}
	System \eqref{system} is \emph{fiber controllable} if for any given initial configuration $z^\circ=(s^\circ,g^\circ)^\top$ and any given final position $g^\bullet$ there exist a time $T>0$ and a control function $u\in L^\infty([0,T];\R{m})$ such that the unique solution to \eqref{system_matrix_CP} with control~$u$ satisfies $z(T)=(s(T),g(T))^\top=(s(T),g^\bullet)^\top$.
\end{definition}
We emphasize that no condition is imposed on the final shape~$s(T)$; this is somehow more natural for locomotion models, where the main scope is to displace regardless of final shape assumed at the arrival point.
Still within the realm of locomotion models, it is observed in nature that most living beings use periodic shape changes to move and this inspires the programming of many robotic devices. 
Moreover, it has been proved in \cite{GJ2020} that among the solutions to a certain optimal control problem for systems like \eqref{system} with two controls (that is, $m=2$)  there is a periodic one.
This motivates the following definition.
\begin{definition}[gait controllability]\label{PC}
	System \eqref{system} is \emph{gait controllable} if for any given initial configuration $z^\circ=(s^\circ,g^\circ)^\top$ and any given final position $g^\bullet$ there exist a time $T>0$ and a control function $u\in L^\infty([0,T];\R{m})$ such that the unique solution to \eqref{system_matrix_CP} with control~$u$ satisfies  $z(T)=(s(T),g(T))^\top=(s^\circ,g^\bullet)^\top$.
\end{definition}
For our purposes we introduce also the following pointwise version of gait controllability.
\begin{definition}[gait controllability at $s^\star$]\label{PCs}
	System \eqref{system} is \emph{gait controllable at a given shape $s^\star\in \cS$}  if for any given initial and final positions  $g^\circ,g^\bullet \in G$ there exist a time $T>0$ and a control function $u\in L^\infty([0,T];\R{m})$ such that the unique solution to \eqref{system_matrix_CP} with initial condition $z^\circ=(s^\star,g^\circ)^\top$ and control~$u$ satisfies  $z(T)=(s(T),g(T))^\top=(s^\star,g^\bullet)^\top$.
\end{definition}
System \eqref{system} is gait controllable if and only if it is gait controllable at every $s^\star\in \cS$.

The proof of the following statement is an immediate consequence of the definitions just given.
\begin{proposition}\label{TCtoPCtoFC}
	Total controllability $\Rightarrow$ gait controllability $\Rightarrow$ fiber controllability.
\end{proposition}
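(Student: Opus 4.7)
The plan is to verify both implications directly from the definitions, since each is essentially a matter of specializing or weakening the final target condition.

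For the first implication (total controllability $\Rightarrow$ gait controllability), I would fix an arbitrary initial configuration $z^\circ=(s^\circ,g^\circ)^\top$ and an arbitrary target position $g^\bullet\in G$. The goal of gait controllability is to reach the state $(s^\circ,g^\bullet)^\top$. The natural move is to apply total controllability with the specific choice $z^\bullet\coloneqq(s^\circ,g^\bullet)^\top$, which is itself an admissible final configuration in $\cZ$. Total controllability then immediately yields the desired time $T>0$ and control $u\in L^\infty([0,T];\R{m})$.

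For the second implication (gait controllability $\Rightarrow$ fiber controllability), I would again fix $z^\circ=(s^\circ,g^\circ)^\top$ and $g^\bullet$, and invoke gait controllability to obtain $T>0$ and $u\in L^\infty([0,T];\R{m})$ steering the system so that $z(T)=(s^\circ,g^\bullet)^\top$. In particular, the $G$-component of $z(T)$ coincides with $g^\bullet$, which is precisely what fiber controllability requires; the fact that the shape component happens to equal $s^\circ$ is simply a stronger piece of information than demanded.

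Since both implications are really only rewrites of the respective definitions, there is no genuine obstacle here; the statement is recorded for bookkeeping and to justify the hierarchy introduced in Definitions~\ref{TC}, \ref{FC}, and~\ref{PC}. The only point worth a brief remark is that the converse implications fail in general, as will be illustrated by the examples and by Remarks~\ref{rem:GCTC} and~\ref{rem:GCFC}, so that gait controllability is a strictly intermediate notion.
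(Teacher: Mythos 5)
Your proof is correct and takes exactly the route the paper intends: the paper simply states that the proposition is an immediate consequence of the definitions, and your two specializations (choosing $z^\bullet=(s^\circ,g^\bullet)^\top$ for the first implication, and observing that the gait-controllability conclusion is a strengthening of the fiber-controllability one for the second) are the right way to make that explicit.
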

In general, the converse implications do not hold true, as we will show in Examples \ref{ex:notPCtoTC} and \ref{ex:notFCtoPC} below. Yet, it is possible to identify a suitable sufficient condition for the equivalence of total and gait controllability.

\begin{proposition}\label{PCimpliesTC}
	Let us assume that system \eqref{system} is gait controllable at a given shape $s^\star$ and that the shape-reduced system $\dot s=f(s,u)= F(s)u$ is totally controllable. Then also system \eqref{system} is totally controllable.
\end{proposition}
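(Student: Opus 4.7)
The plan is to construct the required control as a concatenation of three pieces: first drive the shape from $s^\circ$ to the distinguished shape $s^\star$, then invoke gait controllability at $s^\star$ to adjust the group component, and finally drive the shape from $s^\star$ to the target $s^\bullet$. The choice of the intermediate group target will be dictated by the equivariance of the $G$-component of \eqref{system}, so that the final shape leg can be planned independently of where it starts in~$G$.

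Fix arbitrary $z^\circ=(s^\circ,g^\circ)^\top$ and $z^\bullet=(s^\bullet,g^\bullet)^\top$. By total controllability of $\dot s=F(s)u$, I would first pick a control $u_1\in L^\infty([0,T_1];\R{m})$ whose shape component drives $s^\circ$ to $s^\star$; applied to the full system starting from $z^\circ$, this yields some intermediate state $(s^\star,g_1)^\top$ with $g_1\in G$ determined by the dynamics. Next, I would fix once and for all a control $u_3\in L^\infty([0,T_3];\R{m})$ whose shape component steers $s^\star$ to $s^\bullet$, and apply it to the full system starting from $(s^\star,e)^\top$, where $e\in G$ is the identity; this reaches some $(s^\bullet,\eta)^\top$ with $\eta\in G$ depending only on $u_3$. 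Because the $G$-part of \eqref{system} has the factored form $\dot g=g\xi(s,u)$, left translation $g\mapsto hg$ sends trajectories to trajectories for every $h\in G$, so the same control $u_3$ applied from $(s^\star,g_2)^\top$ reaches $(s^\bullet,g_2\eta)^\top$ for any $g_2$. Setting $g_2\coloneqq g^\bullet\eta^{-1}$, gait controllability at $s^\star$ furnishes a control $u_2\in L^\infty([0,T_2];\R{m})$ steering $(s^\star,g_1)^\top$ to $(s^\star,g_2)^\top$. The concatenation of $u_1$, $u_2$, $u_3$, time-shifted on $[0,T_1+T_2+T_3]$, is a measurable bounded control whose trajectory ends at $(s^\bullet,g_2\eta)^\top=z^\bullet$.

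The only substantive step—and the one I would write out most carefully—is the equivariance computation $\tfrac{\de}{\de t}(hg)=h(g\xi(s,u))=(hg)\xi(s,u)$, which, via uniqueness for the Cauchy problem associated with \eqref{system_matrix_CP}, shows that $\eta$ depends only on $u_3$ and not on the point in~$G$ from which $u_3$ is launched. Everything else is routine: concatenation of $L^\infty$ controls remains $L^\infty$, trajectories glue consistently at the junction times, and one needs only to invoke the two hypotheses in the prescribed order. It is worth noting that the shape-reduced controllability is used twice—before and after the gait stage—while gait controllability is needed only at the single prescribed shape $s^\star$.
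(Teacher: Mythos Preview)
Your proof is correct and follows essentially the same three-leg concatenation as the paper's: drive the shape to $s^\star$, invoke gait controllability at $s^\star$ to hit the required intermediate group element, then drive the shape to $s^\bullet$, using left-invariance of $\dot g=g\xi(s,u)$ to determine that intermediate target. Your formula $g_2=g^\bullet\eta^{-1}$ is in fact the correct one for the left-invariant dynamics; the paper writes the analogous target as $g_c^{-1}g^\bullet$, which is a minor slip unless $G$ is abelian.
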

\begin{proof}
Given an initial configuration $z^\circ=(s^\circ,g^\circ)^\top$, by the total contollability of the shape-reduced system there exist a time $T_a$ and a control function $u_a\in L^\infty([0,T_a];\R{m})$  such that the unique solution to \eqref{system_matrix_CP} with control~$u_a$ satisfies $z(T_a)=(s^\star,g_a)^\top$ for some $g_a\coloneqq g(T_a)\in G$. 

Analogously, for a desired final configuration $z^\bullet=(s^\bullet,g^\bullet)^\top$, there exist a time $T_c$ and a control function $u_c\in L^\infty([0,T_c];\R{m})$  such that the unique solution to \eqref{system_matrix} with initial condition $(s^\star,e)^\top$ with control~$u_c$ satisfies $z(T_c)=(s^\bullet,g_c))^\top$ for some $g_c\coloneqq g(T_c)\in G$. By the equivariance of the vector fields (see Definition~\ref{equiVF}), it follows that the unique solution to \eqref{system_matrix} with initial condition $(s^\star,g_c^{-1}g^\bullet)^\top$ with control~$u_c$ satisfies $z(T_c)=(s^\bullet,g^\bullet))^\top=z^\bullet$. 

Finally, by the gait controllability at $s^\star$, there exist a time $T_b>0$ and a control function $u_b\in L^\infty([0,T_b];\R{m})$ such that the unique solution to \eqref{system_matrix} with initial condition $(s^\star,g_a)^\top$ and control~$u_b$ satisfies $ z(T_b)=(s^\star,g_c^{-1}g^\bullet)^\top$. The concatenation $u\coloneqq {u_a}^\frown {u_b}^\frown u_c$ of the three control functions steers the system from $z^\circ$ to $z^\bullet$.
\end{proof}

We now present some tools and results from Geometric Control Theory, which are useful to prove the controllability notions just introduced.
Thanks to the expressions in \eqref{system}, it is evident that, in system \eqref{system_matrix}, the vector fields $Z_i$ are equivariant with respect to the group action \eqref{eq:groupaction}. Moreover, for all $Z_i,Z_j\in \R{n}\times T_gG$ and for any $g\in G$ the Lie bracket $[Z_i,Z_j]_{T_{(s,g)^\top}\cZ}$ is equivariant with respect to the group action. 
Here, the Lie bracket between vectors in $T_{(s,g)^\top}\cZ$ is defined as
\begin{equation}\label{brackets25}
	[Z_i,Z_j]_{T_{(s,g)^\top}\cZ} 
\coloneqq\begin{pmatrix}
[F_i,F_j]_{T_s\cS}\\
g\Bigl(\, [\xi_i,\xi_j]_\fg + \nabla_s\xi_j\cdot F_i - \nabla_s\xi_i\cdot F_j \,\Bigr)
\end{pmatrix}=
\Psi_g\big(\big[\widetilde Z_i,\widetilde Z_j\big]_{T_{(s,e)^\top}\cZ}\big),
\end{equation}
where $[F_i,F_j]_{T_s\cS}=[F_i,F_j]_{\R{n}}\coloneqq (\nabla_s F_j)F_i - (\nabla_s F_i)F_j$ and $[\xi_i,\xi_j]_\fg\coloneqq \xi_i\xi_j-\xi_j\xi_i$ is the matrix commutator between the elements $\xi_i,\xi_j\in\fg$, while by $\nabla_s\xi\cdot f$ we denote the contraction of the $(d\times d\times n)$-tensor $\nabla_s \xi$ with the $n$-vector $f$ resulting in a $(d\times d)$-matrix and $\widetilde Z=Z(s,e)=(f(s),\xi(s))^\top$ (see \eqref{system_matrix}).
When no risk of misunderstanding occurs, we will drop the subscripts on the brackets and simply use the symbol~$[\cdot,\cdot]$.
It is useful to introduce the projection on the tangent space $T_gG$, which we denote by 
\begin{equation}\label{Pi_g}
\Pi_g^G\colon \R{n}\times T_gG\to T_gG,
\end{equation} 
which is useful to define some subspaces of the Lie algebra~$\fg$. 
Indeed, observing that $\Pi^G_g[Z_i,Z_j]= g\bigl([\xi_i,\xi_j]_\fg + \nabla_s\xi_j\cdot F_i - \nabla_s\xi_i\cdot F_j \bigr)$ and that $T_eG=\fg$, we introduce the following subspaces of~$\fg$:
\begin{equation*}
	\fh_k(s)=\Pi^G_e(H_k(s)) \qquad \text{for $k\geq 1$}\,,
\end{equation*}
where the subspaces $H_k(s)$ of $T_{(s,e)^\top}\cZ$ are defined as
\begin{align*}
	H_1(s) \coloneqq &\, \spann\{\widetilde Z_i:i=1,\ldots,m\}\,, \\
	H_2(s) \coloneqq &\, \spann\bigl\{\big[\widetilde Z_i,\widetilde Z_j\big] \quad\text{for every $\widetilde Z_i,\widetilde Z_j\in \R{n}\times \fg$}\bigr\}\,,\\
	H_3(s) \coloneqq &\, \spann\bigl\{\big[\widetilde Z,h\big] \quad\text{for every $\widetilde Z\in \R{n}\times \fg$ and $h\in H_2(s)$}\bigr\}\,,\\
	\vdots\,\,&\\
	H_k(s) \coloneqq &\, \spann\bigl\{\big[\widetilde Z,h\big] \quad\text{for every $\widetilde Z\in \R{n}\times \fg$ and $h\in H_{k-1}(s)$}\bigr\}\,.
\end{align*}
We denote by $\fz_z$ the Lie algebra generated by the vector fields  $Z_i\in \R{n}\times T_gG$, for $i=1,\dots, m$, so that $\fz_{(s,e)^\top}$ is the Lie algebra generated by the vector fields $\widetilde Z_i$ and therefore the $H_k(s)$ are subspaces of $\fz_{(s,e)^\top}$. 

The following theorems provide sufficient conditions for each of the  controllability notions introduced above. The first one presents two instances of the Chow--Rashevskii Theorem in the case the vector fields are of class $\cC^\infty$ (see \cite[Theorem~5.9]{LibroAgrachev}) or analytic (see \cite[Corollary~5.17]{LibroAgrachev}).
\begin{theorem}\label{RC}
Let $Z_i\in \R{n}\times T_gG$ for $i=1,\dots, m$ be a family of $\cC^\infty$ vector fields on~$\cZ$. 
Then system \eqref{system} is totally controllable if
\begin{equation}\label{dimzg}
\dim \fz_z=n+d\qquad \text{for every $z\in \cZ$,} 
\end{equation}
that is the dimension of the Lie algebra~$\fz_z$ equals that of the tangent space $\R{n}\times T_gG$, for every $g\in G$.

Moreover, if the vector fields $Z_i$'s are real analytic, then condition \eqref{dimzg} is also necessary for total controllability.
\end{theorem}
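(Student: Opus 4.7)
The plan is to derive both implications from the Sussmann--Stefan orbit theorem together with the classical Chow--Rashevskii strategy. The preliminary observation is that system~\eqref{system_matrix} is \emph{symmetric}: since controls $u\in L^\infty([0,T];\R{m})$ may take arbitrary signs, piecewise constant controls with values in $\{-1,0,+1\}$ on each coordinate realize the flows of $\pm Z_i$, so both $Z_i$ and $-Z_i$ are admissible infinitesimal velocities. Consequently, the reachable set from any $z^\circ\in\cZ$ coincides with the \emph{orbit} $\cO(z^\circ)$ of the family $\cF=\{Z_1,\dots,Z_m\}$, i.e.\ the set of points reachable from $z^\circ$ by finite concatenations of flows along $\pm Z_i$. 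Total controllability is then equivalent to $\cO(z^\circ)=\cZ$ for every $z^\circ$, and I assume $\cZ$ is connected (as is the case in all the models treated later) so that this can possibly hold.

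For the sufficiency direction, I would invoke the orbit theorem of Sussmann--Stefan: for any smooth family $\cF$ on $\cZ$, each orbit is an immersed connected submanifold of $\cZ$, and at every $z\in\cO(z^\circ)$ one has the inclusion $\fz_z\subseteq T_z\cO(z^\circ)$. Under the hypothesis $\dim\fz_z = n+d = \dim\cZ$ at every $z\in\cZ$, this inclusion forces $T_z\cO(z^\circ)=T_z\cZ$, so every orbit is an open subset of $\cZ$. Since distinct orbits are disjoint and their union is the connected manifold $\cZ$, such an open partition must be trivial, hence $\cO(z^\circ)=\cZ$ and total controllability follows.

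For the necessity in the analytic setting, the essential additional input is the refinement due to Nagano: when the vector fields in $\cF$ are real-analytic, the inclusion $\fz_z\subseteq T_z\cO(z^\circ)$ becomes an equality at every point of the orbit. Hence, if there existed some $z^\star\in\cZ$ with $\dim\fz_{z^\star}<n+d$, the orbit through $z^\star$ would be an immersed submanifold of strictly smaller dimension than $\cZ$ and could not exhaust $\cZ$; this contradicts total controllability, so \eqref{dimzg} is necessary. The principal obstacle in this plan is the delicate invocation of these two classical structure theorems; once they are in hand, the remainder is a brief dimension-and-connectedness argument, and I would simply cite the detailed proofs of the orbit theorem and its analytic refinement from the references already given.
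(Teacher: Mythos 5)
Your proof is correct and follows exactly the route the paper relies on: the paper does not prove Theorem~\ref{RC} itself but quotes it from \cite[Theorem~5.9]{LibroAgrachev} and \cite[Corollary~5.17]{LibroAgrachev}, and your argument---identifying the reachable set of the symmetric system with the orbit, then using the Orbit Theorem (recorded in the paper as Theorem~\ref{th:orbit}) plus connectedness for sufficiency and Nagano's analytic refinement for necessity---is precisely the proof given in that reference. Your explicit flagging of the connectedness of $\cZ$ is apt, since the paper only states that $G$ is connected, though the shape spaces used in the later examples are connected as well.
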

Condition \eqref{dimzg} is also known in the literature as the Lie algebra $\fz_z$ being fully generated.
If all the vector fields $Z_i$'s are equivariant, we have the following corollary.
\begin{corollary}\label{RCcor}
Let $Z_i\in \R{n}\times T_gG$ for $i=1,\dots, m$ be a family of equivariant $\cC^\infty$ vector fields on~$\cZ$. 
Then system \eqref{system} is totally controllable if $\dim \fz_{(s,e)}=n+d$, for every $s\in\cS$.

Moreover, if the vector fields $Z_i$'s are real analytic, then this condition is also necessary for total controllability.
\end{corollary}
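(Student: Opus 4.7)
The plan is to reduce the identity-fibre hypothesis $\dim\fz_{(s,e)^\top}=n+d$ for every $s\in\cS$ to the full pointwise condition \eqref{dimzg} of Theorem~\ref{RC}, and then invoke that theorem directly in both directions.

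First I would check that equivariance with respect to the family $\{\Psi_g\}_{g\in G}$ is preserved under Lie brackets. By naturality of the bracket under diffeomorphisms, $\Phi_*[X,Y]=[\Phi_*X,\Phi_*Y]$; applied to $\Phi=\Psi_g$ and combined with the defining identity $(\Psi_g)_*X=X\circ\Psi_g$ of Definition~\ref{equiVF}, this yields $(\Psi_g)_*[X,Y]=[X,Y]\circ\Psi_g$. The same conclusion is visible directly in the explicit formula~\eqref{brackets25}, which represents the bracket at $(s,g)^\top$ as the pushforward by $\Psi_g$ of the bracket at $(s,e)^\top$. By induction on the bracket depth, every iterated Lie bracket of the generators $Z_1,\dots,Z_m$ is equivariant, so every element of $\fz_z$ is the value at $z=\Psi_g(s,e)^\top$ of an equivariant vector field belonging to $\fz_{(s,e)^\top}$.

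Next, for each $g\in G$ the map $\Psi_g$ is a diffeomorphism of $\cZ$ sending $(s,e)^\top$ to $(s,g)^\top$, hence its differential is a linear isomorphism of the corresponding tangent spaces. By the previous step it restricts to an isomorphism $\fz_{(s,e)^\top}\to \fz_{(s,g)^\top}$, so
\begin{equation*}
\dim\fz_{(s,g)^\top}=\dim\fz_{(s,e)^\top}\qquad\text{for every } s\in\cS \text{ and every } g\in G.
\end{equation*}
Therefore the assumption $\dim\fz_{(s,e)^\top}=n+d$ for every $s\in\cS$ is equivalent to condition~\eqref{dimzg}, and total controllability follows from the first part of Theorem~\ref{RC}. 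In the real-analytic setting, the same equivalence combined with the converse of Theorem~\ref{RC} yields the necessity of the fibre-wise hypothesis.

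The only point demanding any care is the equivariance of iterated brackets and the resulting transfer of the dimension across $G$-fibres; beyond this, the statement reduces to an immediate application of Theorem~\ref{RC}, and I do not foresee a substantive obstacle.
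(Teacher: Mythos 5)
Your argument is correct and is essentially the paper's own proof, expanded: the paper's one-line justification ("linear independence is preserved by equivariance, hence $\dim\fz_z=\dim\fz_{(s,e)^\top}$ for every $z\in\cZ$") is exactly your observation that iterated brackets of equivariant fields remain equivariant, so $d\Psi_g$ carries $\fz_{(s,e)^\top}$ isomorphically onto $\fz_{(s,g)^\top}$, reducing the hypothesis to condition~\eqref{dimzg} of Theorem~\ref{RC}. No gap; you have simply written out the details the paper leaves implicit.
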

\begin{proof}
Since linear independence is preserved by equivariance, we deduce that $\dim\fz_z=\dim\fz_{(s,e)^\top}$ for every $z\in \cZ$.
\end{proof}

The following theorem, which is the theoretical cornerstone of this paper, provides a sufficient condition for gait controllability.
\begin{theorem}\label{th:gaitcontrollable}
Let $\widetilde Z_i\in \R{n}\times \fg$ for $i=1,\dots, m$ be a family of equivariant $\cC^\infty$ vector fields on~$\cZ$ and assume that the matrix~$F$ is constant in~$s$.
Then system \eqref{system} is gait controllable at a given shape $s^\star\in \cS$ if 
\begin{equation}\label{abovecondition}
	\fg= \fh_2(s^\star)\oplus \fh_3(s^\star)\oplus \cdots \oplus \fh_{k}(s^\star)\oplus \cdots .
\end{equation}	
Moreover, if \eqref{abovecondition} holds true for every $s^\star\in\cS$, then the system is gait controllable according to Definition~\ref{PC}.
\end{theorem}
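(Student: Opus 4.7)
The plan is to establish gait controllability at a single shape $s^\star$; the global statement then follows by applying the argument at every $s^\star\in\cS$. By the equivariance of the $\widetilde Z_i$, the set of states reachable from $(s^\star,g^\circ)$ through gaits is $\{g^\circ\}\cdot R$, where $R\subseteq G$ denotes the set of $G$-holonomies along gaits based at $(s^\star,e)$. Hence it suffices to show that $R$ contains a neighborhood of $e$: since the concatenation of gaits composes their holonomies and $G$ is connected, every $g^\bullet\in G$ will then be reachable.

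A preliminary observation is that, because $F$ is constant in $s$, the shape component $[F_i,F_j]_{T_s\cS}$ of every Lie bracket of the $\widetilde Z_i$ vanishes; a direct induction on~\eqref{brackets25} then yields $H_k(s)\subseteq\{0\}\times\fg$ for every $k\geq 2$, so the elements of each $\fh_k(s^\star)$ are honest lifts of iterated brackets of the $\widetilde Z_i$. Moreover, the linearity of $\dot s=Fu$ means that every piecewise-constant control $u$ with $\int_0^T u\,\de t\in\ker F$ realizes a closed loop in $\cS$ through $s^\star$, so gaits are abundant.

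The hard part is to build, for each $k\geq 2$ and each $\eta\in\fh_k(s^\star)$, a one-parameter family of gaits $u_\eps$ whose holonomy equals $\exp(\eps^k\eta+o(\eps^k))$. For $k=2$, the Chen--Fliess expansion (equivalently, the Baker--Campbell--Hausdorff formula) applied to the ``square-loop'' control that drives the shape successively along $F_i,F_j,-F_i,-F_j$ for time $\eps$ yields, to leading order, the holonomy $\exp(\eps^2\,\Pi^G_e[\widetilde Z_i,\widetilde Z_j])$; nesting such loops produces analogous realizations for every iterated bracket, and hence for every element of $\fh_k(s^\star)$ at order $\eps^k$. Controlling the remainders and extracting the correct leading coefficients through the series expansion is the main technical step. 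Once this is in place, the hypothesis~\eqref{abovecondition} (necessarily a finite sum $\fg=\bigoplus_{k=2}^N\fh_k(s^\star)$, since $\dim\fg<\infty$) splits every target $\eta\in\fg$ as $\eta=\sum_{k=2}^N\eta_k$ with $\eta_k\in\fh_k(s^\star)$; concatenating the associated elementary gaits and applying the implicit function theorem in exponential coordinates near $e\in G$ to the smooth map that sends the finite-dimensional gait parameter to $g(T)\in G$ then shows that $R$ covers a neighborhood of $e$, completing the proof.
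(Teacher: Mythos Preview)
Your argument tracks the paper's proof closely: both use that a constant $F$ makes every bracket-generating loop of the form~\eqref{loop_gen} close \emph{exactly} in the shape variable, both build elementary gaits whose $G$-holonomy is $\exp(\eps^k\,\Pi^G_e(h)+o(\eps^k))$ for $h\in H_k(s^\star)$, and both pass from the spanning condition~\eqref{abovecondition} to local reachability near $e$, then globalize via equivariance and connectedness of~$G$. The one substantive difference is the local-openness step: the paper appeals to the Orbit Theorem~\ref{th:orbit} applied to $d$ linearly independent elements $\Pi^G_e(\widetilde h_i)\in\fg$, whereas you invoke the implicit function theorem on a concatenated finite-dimensional family of gaits. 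Your route is legitimate, but one point needs care: if the ``finite-dimensional gait parameter'' is the tuple $(\eps_1,\dots,\eps_d)$ itself, the holonomy map has \emph{zero} derivative at the origin (the leading term is of order $\eps_j^{k_j}$ with $k_j\geq 2$), so IFT does not apply there as written. You must either reparametrize (e.g.\ by signed loop area, which yields a $C^1$ map with surjective derivative at the origin once you allow reversed loops to realize $-\eta$), or apply IFT at a nearby nonzero parameter value and then exploit that the set $R$ of gait holonomies is a subgroup of $G$ to translate the resulting open set back to a neighborhood of~$e$.
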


The proof of Theorem \ref{th:gaitcontrollable} will be provided below, after some preliminary observations.
First, we recall the celebrated Orbit Theorem. 
Given a family $\cF$ of vector fields on a manifold $\cM$, we define the orbit of the family $\cF$ through the point $p\in \cM$ as
\begin{equation*}
	\cO_p\coloneqq\{p\circ e^{t_1V_1}\circ \cdots \circ e^{t_kV_k} \colon t_i\in \R{}, V_i\in \cF, k\in \N{} \}\subseteq\cM,
\end{equation*}
where $e^{tV}$ is the flow generated by $V$ (cf.~\cite[Section~2.4]{LibroAgrachev}), and $\circ$ denotes the standard composition. In practice, the orbit $\cO_p$ is the set of points in $\cM$ that can be reached starting from $p$ by following a finite collection of vector fields $V_i$\,, each for a period of time $\lvert t_i\rvert$, with the clear meaning that if $t_i<0$ then one follows $-V_i$\,.
\begin{theorem}[{Orbit Theorem, Nagano--Sussmann \cite[Theorem~5.1]{LibroAgrachev}}] \label{th:orbit}
	Let~$\cM$ be a smooth manifold and~$\cF$ be a family of $\cC^\infty$ vector fields on~$\cM$. Then each orbit $\cO_p$ of $\cF$ is a connected immersed submanifold of $\cM$.
	
	Moreover, if the manifold $\cM$ and the vector fields are analytic, then
 the tangent space of $T_q\cO_p$, with $q\in\cO_p$, is given by the Lie algebra generated by $\cF$ at $q$; in particular the dimension of the Lie algebra is constant as $q$ varies on $\cO_p$. 
\end{theorem}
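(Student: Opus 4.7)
The plan is to follow the classical Sussmann--Stefan approach, separating the smooth part (existence of the immersed submanifold structure) from the analytic part (identification of the tangent space with the Lie algebra). Throughout, I would introduce the \emph{pseudo-group} $P_\cF$ generated by the flows of $\cF$, i.e.\ the collection of all well-defined finite compositions $\Phi = e^{t_k V_k}\circ\cdots\circ e^{t_1V_1}$ with $V_i\in\cF$ and $t_i\in\R{}$, so that by construction $\cO_p = \{\Phi(p) : \Phi \in P_\cF\}$.

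\emph{Step 1 (connectedness).} This is immediate: for each $\Phi\in P_\cF$, the concatenation of the paths $t\mapsto e^{tV_i}(\cdot)$ used to build $\Phi(p)$ from $p$ is a continuous curve in $\cO_p$, so $\cO_p$ is path-connected.

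\emph{Step 2 (smooth immersed submanifold structure).} At each $q\in\cO_p$ I would define the candidate tangent space
\begin{equation*}
    \Delta_q \coloneqq \spann\{ \Phi_{*}V(q') : V\in\cF,\ \Phi\in P_\cF,\ \Phi(q')=q\}.
\end{equation*}
By construction $\Delta$ is $P_\cF$-invariant, meaning $\Phi_{*}\Delta_{q'}=\Delta_{\Phi(q')}$; in particular the dimension of $\Delta_q$ is constant along any orbit. Fix $r=\dim\Delta_p$ and pick generators $v_i=\Phi^i_{*}V^i(q_i)$ of $\Delta_p$; pulling back $V^i$ through $\Phi^i$ yields vector fields $W_i$ with $W_i(p)=v_i$ which are themselves flows of elements of $P_\cF$ and whose flows lie in $P_\cF$. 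The map
\begin{equation*}
    \psi\colon (t_1,\dots,t_r)\mapsto e^{t_rW_r}\circ\cdots\circ e^{t_1W_1}(p)
\end{equation*}
has differential at $0$ of rank $r$ (its columns are exactly the $v_i$), so by the constant-rank / implicit function theorem it is an immersion near $0$. The images of such maps, translated by elements of $P_\cF$, provide a covering of $\cO_p$ by coordinate charts and a topology finer than the induced subspace topology. The $P_\cF$-invariance of $\Delta$ guarantees that the transition maps are smooth, and combined with Step 1 this endows $\cO_p$ with the structure of a connected immersed submanifold with $T_q\cO_p = \Delta_q$.

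\emph{Step 3 (analytic case).} Here I would show $\Delta_q = \fL(q)$, where $\fL$ is the Lie algebra generated by $\cF$. The inclusion $\fL(q)\subseteq \Delta_q$ is general (smooth): iterated brackets can be recovered as limits of compositions of flows, e.g.\ $[V,W](q)=\tfrac12\frac{d^2}{dt^2}\big|_{t=0^{+}}\!\big(e^{-tW}\!\circ e^{-tV}\!\circ e^{tW}\!\circ e^{tV}\big)(q)$, and higher brackets analogously; since $\Delta_q$ is a linear subspace containing all such vectors it contains $\fL(q)$. The reverse inclusion $\Delta_q\subseteq \fL(q)$ is where analyticity is essential: for analytic $V,W$ one has the convergent expansion
\begin{equation*}
    (e^{tV})_{*}W = \sum_{k\geq 0} \frac{t^k}{k!}\,(\operatorname{ad}V)^k W, \qquad \operatorname{ad}V\cdot W = [V,W],
\end{equation*}
so pushing $W\in\cF$ forward by an analytic flow of $V\in\cF$ keeps the evaluation in $\fL$; iterating over the composition defining any $\Phi\in P_\cF$ shows that every generator of $\Delta_q$ lies in $\fL(q)$. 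Combined with the constancy of $\dim\Delta$ along orbits established in Step~2, this yields the claimed constancy of $\dim\fL(q)$ on $\cO_p$.

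The main obstacle is Step 2: the distribution $\Delta$ need not have constant rank \emph{a priori}, so the classical Frobenius theorem does not apply and one cannot naively integrate it. The key insight to overcome this is precisely that $P_\cF$-invariance forces rank constancy along each orbit (though not necessarily globally on $\cM$), which is what makes the parametrization $\psi$ above a local diffeomorphism onto its image and allows the orbit to be recognized as a genuine immersed submanifold rather than a pathological subset.
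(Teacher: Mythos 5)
The paper does not actually prove this statement: it is quoted, with attribution, from \cite[Theorem~5.1]{LibroAgrachev}, so there is no internal proof to compare against. Your outline reproduces the standard Nagano--Sussmann argument used in that cited reference (the pseudo-group $P_\cF$, the $P_\cF$-invariant distribution $\Delta$ with constant rank along each orbit, orbit charts built from compositions of flows of pushed-forward fields, and the convergent $\operatorname{ad}$-series to get $\Delta_q=\fL(q)$ in the analytic case), and it is correct as a proof sketch.
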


Let us also explicit the structure of the brackets \eqref{brackets25} in terms of the control function~$u$.
In particular, for given $i,j\in\{1,\ldots,m\}$ ($i\neq j$), the bracket $[\widetilde Z_i,\widetilde Z_j]$ is generated by the control $u\colon[0,\tau]\to\R{m}$, for a small $\tau>0$, defined by
\begin{subequations}\label{loop_gen}
\begin{equation}\label{loop}
u(t)=\begin{cases}
Ae_i & \text{for $0\leq t<\tau/4$,}\\
Be_j & \text{for $\tau/4\leq t<\tau/2$,}\\
-Ae_i & \text{for $\tau/2\leq t<3\tau/4$,}\\
-Be_j & \text{for $3\tau/4\leq t\leq\tau$,}\\
\end{cases}\qquad a,b\in\R{},
\end{equation}
where here $e_i$ and $e_j$ are the $i$-th and $j$-th vectors of the canonical basis in $\R{m}$.
The zero-average control above produces a $\tau$-periodic loop of $[0,\tau]\ni t\mapsto \int_0^t u(\tilde t)\,\de \tilde t$, but in general each actuation cycle might produce a non-zero displacement in the state of the system, which is described, up to leading order in $\tau$, by $[\widetilde Z_i,\widetilde Z_j]ab\tau^2$.
In particular, we have $g(\tau)-e=ab\tau^2\Pi_e^G\big(\big[\widetilde Z_i,\widetilde Z_j\big]\big)+o(\tau^2)$, cf.~\cite[Eq.~(3.2)]{LibroCoron}, so that a non-vanishing $\Pi_e^G\big(\big[\widetilde Z_i,\widetilde Z_j\big]\big)$ corresponds, in the limit $\tau\to0$, to a non-zero displacement in the position variable. On the other hand, to have a non-zero displacement it is not sufficient to have a vanishing bracket, since also the higher order terms must be taken into account. For instance, as we show in the proof of Theorem \ref{th:gaitcontrollable}, a zero displacement in the shape variable (and thus its $\tau$-periodicity) can be obtained under the stricter assumptions of constant matrix~$F$.

The very same behavior is observed also for higher order brackets $[\widetilde{Z}_i,h]$ with $h\in H_{k-1}(s)$ for $k\geq 3$. Indeed, by induction on $k$, the corresponding control $u\colon[0,\tau]\to\R{m}$ is defined as
\begin{equation}\label{loopk}
	u(t)=\begin{cases}
		Ae_i & \text{for $0\leq t<\tau/4$,}\\
		Bu_h(4(t-\frac{\tau}{4})) & \text{for $\tau/4\leq t<\tau/2$,}\\
		-Ae_i & \text{for $\tau/2\leq t<3\tau/4$,}\\
		-Bu_h(4(t-\frac{3\tau}{4}))  & \text{for $3\tau/4\leq t\leq\tau$,}\\
	\end{cases}\qquad a,b\in\R{},
\end{equation}
\end{subequations}
where $u_h\colon[0,\tau]\to\R{m}$ is the control function associated with the bracket $h$.

\begin{proof}[Proof of Theorem~\ref{th:gaitcontrollable}]
We start by observing that the hypothesis of constant vector fields $F_i$ implies, after applying the control $u\colon[0,\tau]\to\R{m}$ associated with a bracket $h\in H_k(s^\star)$, $k\geq2$, defined as in \eqref{loop_gen}, that the final shape is exactly the initial one, that is $s(\tau)=s^\star$. Indeed, by \eqref{system}, for $k=2$ we have
\begin{equation}\label{eq:periodicshape}
	s(\tau)-s^\star=\int_0^\tau F u(t) \,\de t=\int_0^{\tau/4}\!\! AF_i\,\de t+\int_{\tau/4}^{\tau/2}\!\! BF_j\,\de t+\int_{\tau/2}^{3\tau/4}\!\! -AF_i\,\de t+\int_{3\tau/4}^{\tau}\!\! -BF_j\,\de t =0 \,.
\end{equation}
For $k>2$, the same property is proven by induction on $k$ by a computation analogous to \eqref{eq:periodicshape}, noticing that the second and fourth integral correspond to the displacement in shape produced by a bracket of order $k-1$ and are therefore zero by induction.

Since arbitrarily small loops of the form \eqref{loop_gen} generate a displacement in the position variable proportional, up to the leading order, to the projection $\Pi^G_e (h)$ of their corresponding  Lie bracket $h\in H_k(s^\star)$, $k\geq 2$, and by hypothesis  $\fg= \fh_2(s^\star)\oplus \fh_3(s^\star)\oplus \cdots \oplus \fh_{k}(s^\star)\oplus \cdots$, such small periodic loops generate any direction of the Lie algebra~$\fg$. In particular we can take $d$ linearly independent vector fields $\widetilde{h}_i\in H_{k(i)}(s^\star)$, with $k\geq 2$ and $i=1,\dots,d$.
By the Orbit Theorem~\ref{th:orbit} applied to the family $\cF=\{\Pi_e^G(\widetilde{h}_1),\dots,\Pi_e^G(\widetilde{h}_d)\}$,  it follows that by finite concatenations of loops we are able to reach any point in an open neighborhood~$\cU_e$ of~$e$. 

Since the vector fields $\widetilde Z_i$ and their brackets are equivariant, this latter property can be transferred by the group action \eqref{eq:groupaction} to every $g\in G$. Thus, from every point $g\in G$ we can reach, by concatenating loops, any point in $\cU_g=g+\cU_e$.

By the connectedness of~$G$, let~$\gamma$ be a path connecting~$e$ to any target point $g^\bullet$. By compactness, we can find a finite number of points $g_\ell\in \gamma$, $\ell=0,1,\dots \bar \ell$, such that $g_0=e$, $g_{\bar \ell}=g^\bullet$ and $g_{\ell+1}\in g_\ell+\cU_e$ for every $\ell\in 0,\dots, (\bar \ell-1)$. Having proven that each~$g_{\ell+1}$ can be reached from~$g_\ell$ with a finite concatenation of loops, it follows that the system can be steered from~$e$ to~$g^\bullet$ in finitely many steps.

Gait controllability at $s^\star$ follows: thanks to our initial observation, each of these concatenated loops generates no overall shape displacement, and therefore the final shape will coincide with the initial one.

Condition~\eqref{abovecondition} holding at every $s^\star\in\cS$ immediately implies gait controllability according to Definition~\ref{PC} and this concludes the proof.
\end{proof}

The next corollary states that, if, in addition to the hypotheses of Theorem~\ref{th:gaitcontrollable}, a condition on the rank of the (constant) matrix~$F$ is assumed, then system~\eqref{system} is actually totally controllable.
\begin{corollary} \label{gait_tot}
	Let $\widetilde Z_i\in \R{n}\times \fg$ for $i=1,\dots, m$ be a family of equivariant $\cC^\infty$~vector fields on~$\cZ$. Assume that the matrix~$F$ is constant in~$s$ and has rank~$n=\dim(\cS)$.
	If~\eqref{abovecondition} holds true for some $s^\star\in \cS$, then system \eqref{system} is totally controllable.
\end{corollary}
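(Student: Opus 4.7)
The plan is to obtain total controllability by invoking Proposition~\ref{PCimpliesTC}, which reduces the task to verifying its two hypotheses: gait controllability at some fixed shape~$s^\star$ and total controllability of the shape-reduced system $\dot s = F u$. The first of these is immediate from Theorem~\ref{th:gaitcontrollable}, since the present hypotheses (equivariance of the $\widetilde Z_i$, constancy of $F$, and condition~\eqref{abovecondition} at~$s^\star$) are precisely those required there.

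For the second hypothesis, I would argue as follows. Because $\cS$ is parallelizable and $F$ is constant, the columns $F_1,\ldots,F_m\in T_s\cS\cong\R{n}$ are constant vector fields in the global frame, and the shape-reduced control system reads $\dot s = \sum_{i=1}^m F_i u_i$. The rank condition $\rank F = n$ means that $\{F_1,\ldots,F_m\}$ spans $\R{n}$, so for any pair $s^\circ,s^\bullet\in\cS$ one may pick $v\in\R{m}$ with $Fv = s^\bullet - s^\circ$ (in the global frame) and set $u(t)\equiv v$ on a time interval of length~$1$: then $s(1)-s^\circ = \int_0^1 Fv\,\de t = s^\bullet - s^\circ$, giving $s(1)=s^\bullet$. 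Hence the shape-reduced system is totally controllable.

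With both hypotheses of Proposition~\ref{PCimpliesTC} established, its conclusion yields total controllability of the full system~\eqref{system}, as desired. The main conceptual obstacle is making sense of the ``linear'' shape system on the parallelizable manifold~$\cS$, but once one fixes the global frame provided by parallelizability, constancy of~$F$ reduces this step to the elementary controllability of a driftless linear system with surjective input map, and no further work is needed.
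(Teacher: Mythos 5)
Your proof has the same skeleton as the paper's: establish gait controllability at $s^\star$ via Theorem~\ref{th:gaitcontrollable}, establish total controllability of the shape-reduced system $\dot s=Fu$, and conclude with Proposition~\ref{PCimpliesTC}. The only divergence is in the middle step: the paper simply invokes the Chow--Rashevskii Theorem~\ref{RC} (the constant columns of $F$ span $\R{n}$, so the Lie algebra is fully generated at every point), whereas you give the more elementary explicit construction with a constant control $u\equiv v$ where $Fv=s^\bullet-s^\circ$. Your version is perfectly fine for the shape spaces actually used in this paper, which are (rectangular subsets of) products of intervals and hence convex, but as written it silently assumes that the straight segment from $s^\circ$ to $s^\bullet$ stays inside $\cS$; on a general connected parallelizable manifold (or even a non-convex open subset of $\R{n}$) the trajectory could exit the shape space, and one would need to concatenate constant controls along a path covered by finitely many convex pieces --- or just cite Chow--Rashevskii, which handles this automatically. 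So: correct in substance and essentially the paper's argument, with a small caveat on the domain geometry in your elementary replacement of the Chow--Rashevskii step.
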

\begin{proof}
By Theorem~\ref{th:gaitcontrollable}, the system is gait controllable at  $s^\star\in \cS$. 
Since $\rank F=n$, it follows by the classical Chow--Rashevskii Theorem~\ref{RC} that the shape-reduced system $\dot s= Fu$ is totally controllable. The thesis now follows from Proposition~\ref{PCimpliesTC}.
\end{proof}

We now present two examples illustrating how the converse implications to Proposition~\ref{TCtoPCtoFC} are, in general, false.

\begin{example}[gait controllability $\nRightarrow$ total controllability] \label{ex:notPCtoTC}
Let us consider a system \eqref{system} on an analytic manifold $\cZ$ with constant matrix $F$ with $\rank F=m< n$, and analytic $\xi_i$'s.  We also assume that \eqref{abovecondition} is satisfied for every $s^\star\in\cS$. On the one hand, by Theorem~\ref{th:gaitcontrollable}, the system is gait controllable. On the other hand, by Corollary~\ref{RCcor} we know that the system is totally controllable only if $\dim  \fz_{(s,e)^\top}=n+d$ for every $s\in \cS$. However, for every $s\in \cS$,
\begin{equation*}
	H_1(s)\cap\spann\{H_2(s),\dots,H_k(s),\dots\} =\{(0_{\R{m}},e_G)^\top\}
\end{equation*}
since the elements of the second set have a zero shape-component, while, by the linear independence of the $F_i$'s, the only element of $H_1(s)$ with a zero shape-component is $(0_{\R{m}},e_G)^\top$. Note now that $\dim H_1(s)=\rank F =m$ and $\dim(\spann\{H_2(s),\dots,H_k(s),\dots\})=d$ by \eqref{abovecondition}. Thus
\begin{equation*}
\dim  \fz_{(s,e)^\top} = \dim	H_1(s)+\dim (\spann\{H_2(s),\dots,H_k(s),\dots\}) =m+d<n+d\,,
\end{equation*}
whence the system is not totally controllable.

A concrete example illustrating this abstract situation is the following.
For $\cS=\R{3}$, $G=(\R{},+)$, and $\alpha\neq 0$, let us consider the following instance of \eqref{system} with:
\begin{align*}
F=	\begin{pmatrix}
		1& 0\\
		0&1\\
		\alpha& 0
	\end{pmatrix}\,, &&
\xi_1(s)=0\,, &&\xi_2(s)=s_1\,,
\end{align*} 
where the action of $G=(\R{},+)$ on itself is the translation, i.e., $gg'=g+g'$, whereas the action of $G$ on its Lie algebra is the identity, i.e., $g\xi=\xi$. Explicitly, \eqref{system} reads
$$\begin{pmatrix}
	\dot s_1\\
	\dot s_2\\
	\dot s_3\\
	\dot g
\end{pmatrix}=
\begin{pmatrix}
1\\0\\ \alpha\\0
\end{pmatrix}u_1+\begin{pmatrix}
0\\1\\0\\s_1
\end{pmatrix}u_2=\widetilde Z_1u_1+\widetilde Z_2u_2=(\Psi_g)_*(\widetilde Z_1u_1+\widetilde Z_2u_2).
$$
An easy computation delivers $[\widetilde Z_1,\widetilde Z_2]=(0,0,0,1)^\top$, thus, $\Pi^G_e[\widetilde Z_1,\widetilde Z_2]=1$ and condition \eqref{abovecondition} holds true.
\end{example}

\begin{remark} \label{rem:GCTC}
Intuitively, we expect the situation of Example~\ref{ex:notPCtoTC} to be relevant when the locomotor is provided with more independent controls than strictly necessary for locomotion, thus a larger dimension of the shape space; for instance, additional controlled joints might be added to increase efficiency or to accomplish other tasks. 
While for a fully functioning system we might expect $m\geq n$ (with the strict inequality corresponding to overactuation), mechanical failures or other causes might lead to the impossibility to use some of the controls, resulting in $m<n$. 
In such a case, total controllability in the shape might be lost, but gait controllability might still be available, which can be seen as a form of redundancy of the system for locomotion purposes. 
For example, in an $N$-link swimmer, gait controllability can be achieved by controlling only two angles, which alone do not provide total controllability, see, e.g., \cite{MMSZ}.
\end{remark}

\begin{remark}
Notice that, whereas for a constant matrix $F$ the condition $\rank F=m<n$ is not compatible with total controllability of the shape-reduced system, the property might still be achieved with a non-constant $F$, see, e.g.,~\cite{CM2011} for the case of an amoeba-like swimmer immersed in an ideal fluid.
\end{remark}

\begin{example}[fiber controllability $\nRightarrow$ gait  controllability] \label{ex:notFCtoPC}
Let us consider the system \eqref{system} with $\cS=\R{}$, $G=(\R{},+)$, $F(s)=1$ and $\xi(s)=\frac{1}{2}$. 

The system is fiber controllable. Indeed, given any initial configuration $z^\circ=(s^\circ,g^\circ)^\top$ and final position~$g^\bullet$, it is possible to reach $g^\bullet$ by applying the control $u(t)=\sign(g^\bullet-g^\circ)$ for a time $T=2\abs{g^\bullet-g^\circ}$. On the other hand, the system is not gait controllable, since any control producing a periodic shape change will result in a zero displacement in position. Indeed, we notice $s(0)=s(T)$ occurs if and only if the control has zero average on $[0,T]$. However, this implies that 
$$g(T)-g(0)=\int_0^T \dot g(t)\de t=\int_0^T \xi u(t)\de t=\int_0^T \frac{1}{2}u(t)\de t =0\,.
$$
\begin{figure}[tbh]
		\begin{center}
	\begin{tikzpicture}[line cap=round,line join=round,x=4mm,y=4mm, line width=1pt, scale=1.2, >=stealth]
		\clip(7,-0.5) rectangle (23,6);
		\draw [line width=1.2pt, fill=gray!40] (10,2.5)-- (10,0.5)-- (12,0.5)-- (12,2.5)-- (10,2.5);
		\draw (12,1.5)--(15.3,1.5);
		\draw (15.5,1.5)--(18,1.5);
		\draw (15.3,1.1)-- (14.5,1.1)--(14.5,1.9)--(15.3,1.9);
		\draw (14.7,1.3)-- (15.5,1.3)--(15.5,1.7)--(14.7,1.7);
		\draw [line width=1.2pt, fill=gray!40] (18,2.5)-- (18,0.5)-- (20,0.5)-- (20,2.5)-- (18,2.5);
		\draw [] (7,0.5)-- (23,0.5);	
		\fill [pattern = north east lines] (7,0.5) rectangle (23,-0.1);
		\draw[|-|] (11,3) -- (19,3);
		\draw (15,3.5) node {$s(t)$};
		\draw[<-] (19,3.5) -- (19,4.5) node[above] {$g(t)$};
	\end{tikzpicture}
\end{center}
	\caption{The incompetent crawler of Example~\ref{ex:notFCtoPC}.}\label{fig:2blocks}
\end{figure}
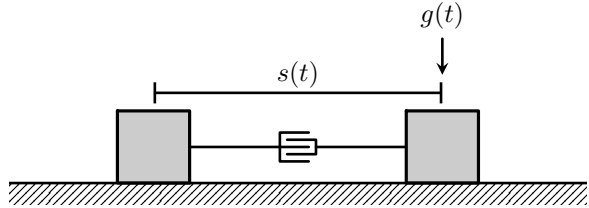

This system can be considered as a representation of the ``incompetent'' crawler in Figure~\ref{fig:2blocks}.
The shape describes the signed distance between the two blocks (to avoid overlappings when the sign changes, we can assume that the two block run on parallel tracks) and the position is given by the position of one of the two blocks. The equations of motion of the system describe the quasistatic locomotion of the crawler assuming (the same) viscous friction between each of the blocks and the substrate \cite{DeSTat}. In this case the crawler is incompetent, since it cannot properly locomote: indeed, it can be easily verified that the position of the midpoint between the two blocks is a constant of motion.
\end{example}

\begin{remark}\label{rem:GCFC}
	An additional advantage of gait controllability over fiber controllability can be observed looking at local versions of these two notions, namely, by considering, in Definitions~\ref{FC} and~\ref{PC}, only final positions $g^\bullet$ in a possibly small neighborhood $\cU_{g^\circ}$ of the initial position $g^\circ$.
	
	If the control vector fields are equivariant, then local gait controllability implies gait controllability. This follows from the fact that equivariance allows us to consider, without loss of generality, neighborhoods $\cU_g=g\cU_e$. Indeed, if from $e$ it is possible to reach every point in $\cU_e$, then from any $g$ it is possible to reach every point in $g\cU_{e}$; thus, any desired final position $g^\bullet$ can be reached by concatenating a finite number of steps.
	
	On the other hand, the same does not hold for fiber controllability. A counterexample can be easily produced by restricting the shape space in Example~\ref{ex:notFCtoPC} to a bounded open interval, e.g., $\cS=(-1,1)$. It is easily verified that the system is locally fiber controllable. However, since $\xi(s)=1/2$, from any starting position $g^\circ\in \R{}$ we can only reach positions not further away than half the diameter of $\cS$, as the midpoint of the blocks is stationary.
\end{remark}

%%%%%%%%%%%%%%%%%%%%%%%%%%%%%%%%%%%%%%%%%%%%%%%%%%%%%%%%%%%%%%%%%%%%%%%%%%%%

\section{General framework: mechanics} \label{sec:generalmodel}

We start by recalling Resistive Force Theory~\cite{GH1955}, which is a well-known approximation of hydrodynamic forces and torques for one-dimensional bodies moving in a low Reynolds number flow in three dimensions.
In essence,  the linear density of hydrodynamic force~$\bef$ at a point~$\bx$ of the swimmer depends linearly on the velocity~$\bv$ at that point through two drag coefficients $C_\parallel\,,C_\perp>0$, measuring the resistance to motion in the parallel and perpendicular direction, respectively, to the unit tangent vector to the filament at~$\bx$.
In formulae, we have
\begin{subequations}\label{formula3.1}
\begin{equation}
\bef=C_\parallel (\bv\cdot\bt)\bt+C_\perp (\bv\cdot\bn)\bn\,,
\end{equation}
where~$\bt$ and~$\bn=\bt^{\perp}$ are the tangent and normal unit vectors, respectively, at~$\bx$.
Consequently, the linear density of hydrodynamic torque~$\bm$ is given by
\begin{equation}
\bm=(\bx-O)\times \bef\,,
\end{equation}
\end{subequations}
for a choice of the pole~$O$ with respect to which torques are computed.

It has been shown in~\cite{FRKHJ} that, for slender swimmers, a reasonable estimation for the ratio of the two drag coefficients is $C_\perp/C_\parallel=1.81\pm0.07$, so that, for the rest of the paper, we will consider
\begin{equation}\label{ratio}
\frac{C_\perp}{C_\parallel} \in(1,2],
\end{equation}
with $C_\perp/C_\parallel=2$ being the limit as the ratio of length and thickness of the swimmer tends to infinity~\cite{Lauga_book}.
We stress here that, in modeling uni-dimensional swimmers, the deformations that we will propose in the sequel involving stretching or elongation/compression of links will not affect the drag coefficients: in particular, we notice that possible changes in the length/thickness ratio have a vanishing effect on the ratio of the coefficients in the slender limit $C_\perp/C_\parallel\to 2$, so we will neglect them from our analysis.

\smallskip

\begin{figure}
	\begin{center}
\begin{tikzpicture}[line cap=round,line join=round,>=stealth,x=1cm,y=1cm, line width=1pt]
	\clip(-4.0,-2.) rectangle (5.,5.);
	\draw [shift={(0,0)},color=RoyalBlue,fill=RoyalBlue,fill opacity=0.1] (0,0) -- (0:2.0255) arc (0:69.0899:2.0255) -- cycle;	\draw [gray, dashed, line width=1pt] (0,0)-- (1.78,4.67);
	\draw [gray, dashed, line width=1pt] (-4,0)-- (5,0);
	\draw [shift={(0,0)},color=Orange,fill=Orange,fill opacity=0.1] (0,0) -- (36.8699:2.5319) arc (36.8699:69.0899:2.5319) -- cycle;
	\draw [shift={(0,0)},color=Orange,fill=Orange,fill opacity=0.1] (0,0) -- (69.0899:2.5319) arc (69.0899:101.3099:2.5319) -- cycle;
	\draw [line width=2pt] (0,0)-- (4,3);
	\draw [line width=2pt] (0,0)-- (-0.98,4.9);
	\draw [->,line width=2pt] (-3,-1.5)-- (-2,-1.5) node[anchor=west] {$\be_x$};
	\draw [->,line width=2pt] (-3,-1.5)-- (-3,-0.5) node[anchor=south] {$\be_y$};
	\draw [->,line width=1pt] (-3,-1.5)-- (0,0) node[anchor=north]{$\bh_t$};
	\draw [red,->,line width=2pt] (0,0) -- (0.8,0.6)node[anchor=west] {$\be^-_t$};
	\draw [red,->,line width=2pt] (0,0) -- (-0.196,0.98) node[anchor=east] {$\be^+_t$};
	\draw[Orange] (0.3,2.8) node {$\sigma_t$};
	\draw[Orange] (1.65,2.3) node {$\sigma_t$};
	\draw[RoyalBlue] (2.1,1) node {$\theta_t$};
\end{tikzpicture}
	\end{center}
\caption{The parametrization of the two-link swimmer given in Section~\ref{sec:generalmodel}.}

\label{fig:scallop}
\end{figure}
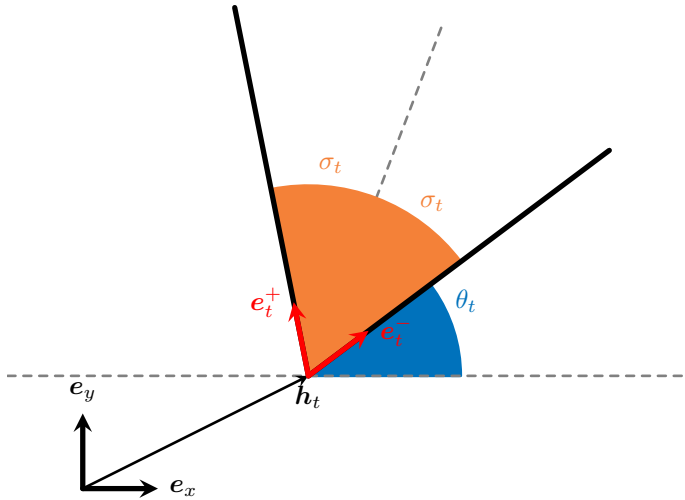	
To model a two-link swimmer that is free to move in the plane, we denote by $\bh_t=(x_t,y_t)\in\R{2}$ the position of the hinge with respect to the lab frame and by $\theta_t \in(-\pi,\pi)$ 
the angle that the bisector of the two links forms with the positive $x$-axis, cf.~Figure~\ref{fig:scallop}.
Let $2\sigma_t$ denote the opening angle between the links, which, in this paper, might change in time in a way directly depending on the control parameters. 
We will denote the two links with the symbols $+$ and $-$, and use $\pm$ collectively.

As argued in the Introduction, several possible mechanisms might be employed to obtain a change in the lengths of the links, each one yielding different velocity fields along them.
\begin{itemize}
	\item \emph{Active longitudinal strain}: the material can be actively deformed longitudinally along the links. That is, we are able to control the deformation gradient $\gamma_t^\pm$ of the material of the two links.  We use this mechanism in the \emph{stretching links} model of Section~\ref{sec:stretch} .
	\item \emph{Hinge movement}: the swimmer can be seen as a unique inextensible wire, bent at a point whose position along the wire is controlled. We describe mathematically this mechanisms as a change in the reference configurations $S_t^{\pm}$ of the two links on their hinge-side ends, with one elongating and the other shortening at the same rate. This case is studied in the \emph{sliding links} model of Section~\ref{sec:sliding}.
	\item \emph{Growth}: the length-change is produced by protruding and retracting body parts, that, consequently, are not always in contact with the fluid. 
	Also this case is described with a change in the reference configurations $S_t^{\pm}$ of the two links. 
	If such a change occurs at the ends of the links opposite to the hinge we have the \emph{growing links} model of Section~\ref{sec:growing}. 
	We also study a \emph{telescopic links} model in Section~\ref{sec:telescopic}, where the links elongate analogously to telescopic poles. More precisely, each link is divided into two sections, with the inner one $\widehat S_t^{\pm}$ sliding inside the outer one $S_t^{\pm}$, which corresponds to a change of the reference configurations $\widehat S_t^{\pm}$ of the inner sections at their junction with the outer ones. 
\end{itemize}

Denoting by $S_t^\pm\coloneqq[a_t^\pm,b_t^\pm]$ the reference configuration of the links of the two-link swimmer at time~$t$, the generic point in each link is described by
\begin{equation}\label{G01}
\bx_t^\pm(\eta)=\bh_t+\lambda_t^\pm(\eta)\be_t^\pm,\qquad \text{for $\eta\in S_t^\pm$,}
\end{equation}
where $\lambda_t^\pm\colon S_t^\pm\to[0,+\infty)$ identifies the position along the $\pm$-link at time~$t$, whereas the unit vectors $\be_t^\pm$ describe the directions of the two links. 
Recalling the properties of rotations in dimension two, they can be expressed as
\begin{equation*}%\label{G02}
\be_t^\pm=\big(\cos(\theta_t\pm\sigma_t),\sin(\theta_t\pm\sigma_t)\big)\eqqcolon(\cos\alpha_t^\pm,\sin\alpha_t^\pm)\,,
\end{equation*}
or, equivalently, as
\begin{equation}\label{G03}
	\be_t^\pm=R_{\theta_t}(\cos\sigma_t,\pm\sin\sigma_t)=R_{\theta_t}R_{\pm\sigma_t}\be_1=R_{\theta_t\pm\sigma_t}\be_1=R_{\alpha_t^\pm}\be_1, 
\end{equation}
where $\be_1=(1,0)$ and $R_\alpha$ is the rotation of angle $\alpha$.

By requiring $\lambda_t^\pm (a_t^\pm)=0$,  we can conveniently express \eqref{G01} in terms of the deformation gradient $\gamma_t^\pm\colon S_t^\pm\to(0,+\infty)$ of the $\pm$-link as
\begin{equation}\label{G04}
\bx_t^\pm(\eta)=\bh_t+\bigg(\int_{a_t^\pm}^\eta \gamma_t^\pm(\varsigma)\,\de\varsigma\bigg)\be_t^\pm.
\end{equation}

In order to compute the hydrodynamic forces and torque acting on the swimmer, we need both the space derivative $(\bx_t^\pm)'(\eta)$ and the time derivative $\dot\bx_t^\pm(\eta)$:
\begin{equation}\label{G05}
\begin{split}
(\bx_t^\pm)'(\eta)=&\, \gamma_t^\pm(\eta)\be_t^\pm \,, \\
\bv_t(\bx_t^\pm(\eta))\coloneqq&\, \dot\bx_t^\pm(\eta)= \dot\bh_t+\bigg(\int_{a_t^\pm}^\eta \dot\gamma_t^\pm(\varsigma)\,\de\varsigma\bigg)\be_t^\pm-\gamma_t^\pm(a_t^\pm)\dot a_t^\pm \be_t^\pm+\bigg(\int_{a_t^\pm}^\eta \gamma_t^\pm(\varsigma)\,\de\varsigma\bigg)\dot\be_t^\pm \\
= &\, \dot\bh_t+\bigg(\int_{a_t^\pm}^\eta \dot\gamma_t^\pm(\varsigma)\,\de\varsigma\bigg)\be_t^\pm-\gamma_t^\pm(a_t^\pm)\dot a_t^\pm \be_t^\pm+\bigg(\int_{a_t^\pm}^\eta \gamma_t^\pm(\varsigma)\,\de\varsigma\bigg)\dot\alpha_t^\pm(\be_t^\pm)^\perp \\
= &\, R_{\theta_t}\bigg[R^{-1}_{\theta_t}\dot\bh_t+\bigg(\int_{a_t^\pm}^\eta \dot\gamma_t^\pm(\varsigma)\,\de\varsigma\bigg)R_{\pm\sigma_t}\be_1-\gamma_t^\pm(a_t^\pm)\dot a_t^\pm R_{\pm\sigma_t}\be_1 \\
&\, \phantom{R_{\theta_t}\bigg(}+\bigg(\int_{a_t^\pm}^\eta \gamma_t^\pm(\varsigma)\,\de\varsigma\bigg)\dot\alpha_t^\pm R_{\pm\sigma_t}\be_2\bigg]\,,
\end{split}
\end{equation}
where we have used \eqref{G03} to write $\bv(\bx_t)$ in equivalent ways.
Using Resistive Force Theory \cite{GH1955}, the line density of force and torque can be approximated by the parallel and orthogonal components of the space derivative $\dot{\bx}_t$ through suitable, body-dependent coefficients~$C_\parallel$ and~$C_\perp$, respectively. 
Therefore, the line densities of force~$\bef_t$ and torque~$\bm_t$ (the latter computed with respect to $\bh_t$) are 
\begin{equation*}%\label{G061}
\begin{split}
\bef_t(\bv_t(\bx_t^\pm(\eta))) = &\, [(C_\parallel-C_\perp)\be_t^\pm\otimes\be_t^\pm+C_\perp I]\dot\bx_t^\pm(\eta) 
= R_{\alpha_t^\pm}[(C_\parallel-C_\perp)\bE_{11}+C_\perp I] R_{\alpha_t^\pm}^{-1} \dot\bx_t^\pm(\eta) \\
= &\, R_{\theta_t^\pm}R_{\pm\sigma_t} J R_{\pm\sigma_t}^{-1}R_{\theta_t}^{-1} \dot\bx_t^\pm(\eta), \\
\bm_t(\bv_t(\bx_t^\pm(\eta)),\bx_t^\pm(\eta))= &\,(\bx_t^\pm(\eta)-\bh_t)\times\bef_t(\bv_t(\bx_t^\pm(\eta))= \bigg(\int_{a_t^\pm}^\eta \gamma_t^\pm(\varsigma)\,\de\varsigma\bigg)\be_t^\pm \times \bef_t(\bv_t(\bx_t^\pm(\eta)),
\end{split}
\end{equation*}
where
$$\bE_{11}\coloneqq \be_1\otimes\be_1=\begin{pmatrix}1&0\\0&0\end{pmatrix}\qquad\text{and}\qquad J\coloneqq (C_\parallel-C_\perp)\bE_{11}+C_\perp I=\begin{pmatrix}C_\parallel&0\\0&C_\perp\end{pmatrix},$$
and $\bm_t(\bv_t(\bx_t^\pm(\eta)),\bx_t^\pm(\eta))=m_t(\bv_t(\bx_t^\pm(\eta)),\bx_t^\pm(\eta))\be_3\in\R3$.
Integrating over the links we obtain
\begin{subequations}\label{G06}
\begin{eqnarray}
\bF_t^\pm \!\!\!\!& = \!\!\!\!& \int_{\bx_t^\pm(S_t^\pm)}\bef_t(\bv_t(\xi))\,\de\xi=\, \int_{S_t^\pm}\bef_t(\bv_t(\bx_t^\pm(\eta)))\lVert(\bx_t^\pm)'(\eta)\rVert\,\de \eta = \int_{S_t^\pm}\bef_t(\dot\bx_t^\pm(\eta))\gamma_t^\pm(\eta)\,\de \eta, \label{G06a}\\ 
\bM_t^\pm \!\!\!\!& = \!\!\!\!& \int_{\bx_t^\pm(S_t^\pm)} \bm_t(\bv_t(\xi),\xi)\,\de \xi=\int_{S_t^\pm} m_t(\bv_t(\bx_t^\pm(\eta)),\bx_t^\pm(\eta))\lVert(\bx_t^\pm)'(\eta)\Vert\be_3\, \de \eta \nonumber\\
\!\!\!\!& = \!\!\!\!&\, \bigg( \int_{S_t^\pm} m_t(\bv_t(\bx_t^\pm(\eta)),\bx_t^\pm(\eta))\gamma_t^\pm(\eta)\de \eta\bigg)\be_3\eqqcolon M_t^\pm \be_3\,. \label{G06b}
\end{eqnarray}
\end{subequations}
The balance of total hydrodynamic forces and torques is obtained enforcing the self-propulsion constraint and reads
\begin{equation}\label{G07}
\begin{cases}\bF_t=\bF_t^++\bF_t^-=\bzero\,,\\
M_t= M_t^++M_t^- =0\,,
\end{cases}
\end{equation}
fully describing the evolution of the system.
A careful inspection of formulae~\eqref{G06} using~\eqref{formula3.1} shows that there is a linear dependence of the forces and torques on the velocities $\dot\bh_t$ and $\dot\theta_t$, and on the rate of deformation~$\dot \bs_t = (\dot \sigma_t,\dot a^+_t,\dot b^+_t,\dot a^-_t,\dot b^-_t,\dot \gamma^+_t, \dot \gamma^-_t)$ of the shape variables (which is the $s\in\cS$ in~\eqref{system}). 
Therefore, the structure of equation \eqref{G07} is
\begin{equation}\label{G07_2}
\bzero=\begin{pmatrix}
\bF_t\\
M_t\end{pmatrix}=\widetilde{\cR}_t\begin{pmatrix}\dot\bh_t\\\dot\theta_t\end{pmatrix}+\cV_t\dot\bs_t\,,
\end{equation}
where $\widetilde{\cR}_t$ is the so-called \emph{Grand Resistance Matrix} and $\cV_t$ is the matrix of the shape vector fields. 
In order to make the velocities $\dot\bh_t$ and $\dot\theta_t$ explicit in the equation of motion \eqref{G07_2}, we have to invert the matrix $\widetilde{\cR}_t$, which is possible since it is symmetric positive definite, see \cite[Section 5.2.1]{Lauga_book}.
In Section~\ref{sec:esempi} we discuss with more detail some specific instances of this general framework, but first we show how it can be reformulated in the control framework of Section~\ref{sec:control}.

\subsection{The mechanical problem as a control problem}
Since we work in a two-dimensional framework, the group $G$ describing the position of the swimmer is
\begin{equation*}
	G=\SE{2}\,,\qquad \text{with\qquad $G\ni g_t=(\bh_t,\theta_t)=(x_t,y_t,\theta_t)$,}
\end{equation*}
which in matrix notation can be written as
\begin{equation*}
	g_t=\begin{pmatrix}
		R_{\theta_t} & \bh_t\\
		(0,0) &1
	\end{pmatrix} \qquad\text{so that}\qquad \tilde gg=\begin{pmatrix}
	R_{\tilde \theta_t}R_{\theta_t} & \tilde \bh_t +R_{\tilde \theta_t}\bh_t \\
	(0,0) &1
\end{pmatrix}
\end{equation*}
The complete shape of the swimmer is given by the angle $\sigma_t\in(0,\pi)$ and either the functions $\lambda_t^\pm$ given in \eqref{G01}, or equivalently their deformation gradients $\gamma^\pm_t$; notice that the domains $S^\pm_t$ are part of the shape. In order to reduce the problem to finite dimension, we need to characterize the gradients $\gamma^\pm_t$ with only a finite set of parameters; in our models, we will simply assume that $\gamma^\pm_t$ is constant in $s$ along each of the arms.

Setting $D=\{(a,b)\colon b> a\}\subset \R{2}$, we have
\begin{equation}\label{eq:shape_space}
	\cS=(0,\pi)\times D^2\times \R{2}\,,\qquad \text{with $\cS\ni (\sigma_t,a^+_t,b^+_t,a^-_t,b^-_t,\gamma^+_t, \gamma^-_t)$}
\end{equation}
\begin{remark}
The shape space $\mathcal{S}$ in~\eqref{eq:shape_space} is the largest that we can consider. 
In concrete examples, we will restrict ourselves to smaller rectangular subsets (that is, still in the form of cartesian products of intervals) of $\mathcal{S}$ as shape space. 
On the one hand, this is not restrictive since the theorems used to prove controllability rely on Lie brackets computations which are linked only to small deformations; on the other hand, a bounded shape space is more realistic and allows us to identify different mechanisms of actuation and swimming strategies, both reducing the number of controls (and thus effective shape variables) and confining the possible shapes near a certain configuration (e.g., a scallop for $\sigma\approx \pi/4$ or a discretized filament for $\sigma\approx\pi/2$).
\end{remark}

%%%%%%%%%%%%%%%%%%%%%%%%%%%%%%%%%%%%%%%%%%%%%%%%%%%%%%%%%%%%%%%%%%%%%%%%%%%

\section{Three (plus one) models of swimmers} \label{sec:esempi}
In this section, we consider three (plus one) models of swimmers, differing in the way they deform.
To describe each of them, we will specify the sets $S_t^\pm$ and the functions $\gamma_t^\pm$ appropriately.

\subsection{Stretching links}\label{sec:stretch}
In this model, we consider a two-link swimmer whose links can elongate at a uniform (in~$\eta$) strain~$\gamma_t^\pm$ (notice that the strain function still depends on the arm), cf.~Figure~\ref{fig:stretching}.
We also set $S_t^\pm=[0,1]$.

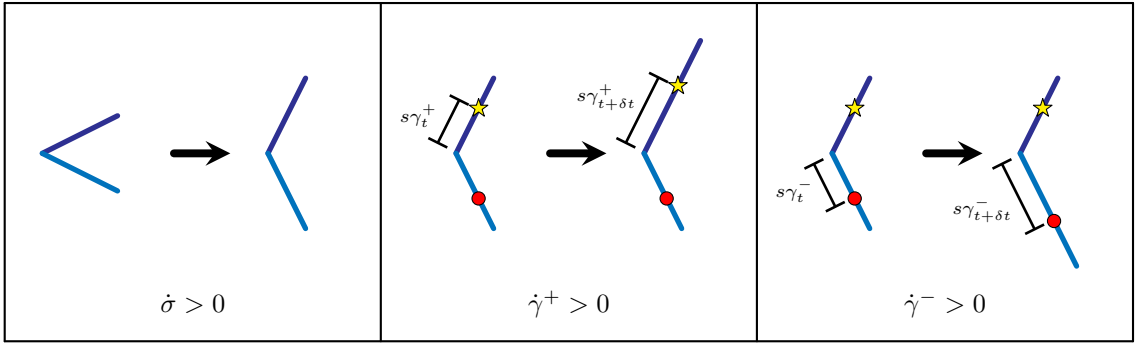
\begin{figure}[tb]
	\begin{center}
\begin{tikzpicture}[line cap=round,line join=round,>=stealth,x=5mm,y=5mm]
	\clip(-2,-5.5) rectangle (30,4.5);
	\draw [line width=2pt,color=Blue] (0,0)-- (2,1);
	\draw [line width=2pt,color=Blue] (6,0)-- (7,2);
	\draw [line width=2pt,color=Blue] (11,0)-- (12,2);
	\draw [line width=2pt,color=Blue] (16,0)-- (17.5,3);
	\draw [line width=2pt,color=Blue] (21,0)-- (22,2);
	\draw [line width=2pt,color=Blue] (26,0)-- (27,2);
	\draw [line width=2pt,color=RoyalBlue] (0,0)-- (2,-1);
	\draw [line width=2pt,color=RoyalBlue] (6,0)-- (7,-2);
	\draw [line width=2pt,color=RoyalBlue] (11,0)-- (12,-2);
	\draw [line width=2pt,color=RoyalBlue] (16,0)-- (17,-2);
	\draw [line width=2pt,color=RoyalBlue] (21,0)-- (22,-2);
	\draw [line width=2pt,color=RoyalBlue] (26,0)-- (27.5,-3);
	\draw [line width=0.8pt] (-1,4)-- (-1,-5);
	\draw [line width=0.8pt] (29,-5)-- (29,4);
	\draw [line width=0.8pt] (29,-5)-- (-1,-5);
	\draw [line width=0.8pt] (29,4)-- (-1,4);
	\draw [line width=0.8pt] (9,4)-- (9,-5);
	\draw [line width=0.8pt] (19,4)-- (19,-5);
	\draw [line width=1pt,|-|] (10.5,0.25)-- (11.1,1.45);
	\draw [line width=1pt,|-|] (15.5,0.25)-- (16.4,2.05);
	\draw [line width=1pt,|-|] (20.5,-0.25)-- (21.1,-1.45);
	\draw [line width=1pt,|-|] (25.5,-0.25)-- (26.4,-2.05);
	\draw [->,line width=3pt] (3.5,0) -- (5,0);
	\draw [->,line width=3pt] (13.5,0) -- (15,0);
	\draw [->,line width=3pt] (23.5,0) -- (25,0);
	\begin{scriptsize}
		\draw [] (11.6,1.2) node[star,star points=5, star point height=2.3pt, inner sep=1pt, color=black,fill=Yellow, draw] {};
		\draw [fill=red] (11.6,-1.2) circle (2.5pt);
		\draw [] (16.9,1.8) node[star,star points=5, star point height=2.3pt, inner sep=1pt, color=black,fill=Yellow, draw] {};
		\draw [fill=red] (16.6,-1.2) circle (2.5pt);
		\draw [] (21.6,1.2) node[star,star points=5, star point height=2.3pt, inner sep=1pt, color=black,fill=Yellow, draw] {};
		\draw [fill=red] (21.6,-1.2) circle (2.5pt);
		\draw [] (26.6,1.2) node[star,star points=5, star point height=2.3pt, inner sep=1pt, color=black,fill=Yellow, draw] {};
		\draw [fill=red] (26.9,-1.8) circle (2.5pt);
		\draw (10,1) node {$s\gamma^+_t $};
		\draw (15,1.5) node {$s\gamma^+_{t+\delta t} $};
		\draw (20,-1) node {$s\gamma^-_t $};
		\draw (25,-1.5) node {$s\gamma^-_{t+\delta t} $};
	\end{scriptsize}
	\draw (4,-4) node {$\dot \sigma>0$};
	\draw (14,-4) node {$\dot \gamma^+>0$};
	\draw (24,-4) node {$\dot \gamma^->0$};
\end{tikzpicture}
	\end{center}
	\caption{Illustration of the possible shape changes of the \emph{stretching links} model of Section~\ref{sec:stretch}. The red bullet and the yellow star represent the position of some exemplifying material points in consecutive steps.}
	\label{fig:stretching}
\end{figure}

Then \eqref{G04}, \eqref{G05}, and \eqref{G06} become
%\begin{subequations}
\begin{align*}
\bx_t^\pm(\eta)=&\, \bh_t+\eta\gamma_t^\pm\be_t^\pm\\
(\bx_t^\pm)'(\eta)=&\, \gamma^\pm_t\be_t^\pm \,, \\
\bv(\bx_t^\pm(\eta))=&\,
R_{\theta_t}\bigg(R^{-1}_{\theta_t}\dot\bh_t+\eta\dot\gamma_t^\pm R_{\pm\sigma_t}\be_1+\eta\gamma_t^\pm(\dot\theta_t\pm\dot\sigma_t) R_{\pm\sigma_t}\be_2\bigg) ,\\
 \bF_t^\pm=&\, \gamma_t^\pm R_{\theta_t} \bigg[R_{\pm\sigma_t}J R_{\pm\sigma_t}^{-1}R_{\theta_t}^{-1} \dot\bh_t +\frac{C_\parallel}2 \dot\gamma_t^\pm R_{\pm\sigma_t}\be_1+\frac{C_\perp}2 \gamma_t^\pm(\dot\theta_t\pm\dot\sigma_t) R_{\pm\sigma_t}\be_2 \bigg], \\
 \bM_t^\pm=&\, \bigg[\frac{C_\perp}2(\gamma_t^\pm)^2 (R_{\pm\sigma_t}\be_2){\cdot}(R_{\theta_t}^{-1}\dot\bh_t) +\frac{C_\perp}3(\gamma_t^\pm)^3(\dot\theta_t\pm\dot\sigma_t)\bigg]\be_3\,.
\end{align*}
%\end{subequations}
The total balance of forces and torques acting on the swimmer \eqref{G07} become
\begin{equation}\label{207}
\begin{cases}
\bF_t=R_{\theta_t}\big[ A_t R_{\theta_t}^{-1}\dot\bh_t + \dot\theta_t\bb_t+\dot\sigma_t\tilde\bv^1_t+\dot\gamma_t^+\tilde\bv^{2,+}_t+\dot\gamma_t^-\tilde\bv^{2,-}_t\big]=\bzero\,,\\
 M_t=\bb_t\cdot(R_{\theta_t}^{-1}\dot\bh_t)+ c_t\dot\theta_t+v_t^3\dot\sigma_t=0\,,
\end{cases}
\end{equation}
where
\begin{equation*}%\label{208}
\begin{split}
A_t=&\, A(\sigma_t,\gamma_t^\pm;C_\parallel,C_\perp)\coloneqq \gamma_t^+ R_{\sigma_t}JR_{\sigma_t}^{-1}+\gamma_t^-R_{\sigma_t}^{-1}JR_{\sigma_t}\,, \\ 
\bb_t=&\, \bb(\sigma_t,\gamma_t^\pm;C_\perp)\coloneqq \frac{C_\perp}2\big((\gamma_t^+)^2R_{\sigma_t}+(\gamma_t^-)^2R_{\sigma_t}^{-1}\big)\be_2\,, \\
\tilde\bv^1_t=&\, \tilde\bv^1(\sigma_t,\gamma_t^\pm;C_\perp)\coloneqq \frac{C_\perp}2\big((\gamma_t^+)^2R_{\sigma_t}-(\gamma_t^-)^2R_{\sigma_t}^{-1}\big)\be_2\,, \\ 
\tilde\bv^{2,\pm}_t=&\, \tilde\bv^2(\sigma_t,\gamma_t^\pm;C_\parallel)\coloneqq \frac{C_\parallel}2 \gamma_t^\pm R_{\sigma_t}^{\pm1}\be_1\,,\\
c_t=&c(\gamma_t^\pm;C_\perp)\coloneqq \frac{C_\perp}3[(\gamma_t^+)^3+(\gamma_t^-)^3],\\
v_t^3=&v^3(\gamma_t^\pm;C_\perp)\coloneqq \frac{C_\perp}3[(\gamma_t^+)^3-(\gamma_t^-)^3]. 
\end{split}
\end{equation*}
The expressions in \eqref{207} can be gathered in matrix form as
\begin{equation*}
\begin{split}
\begin{pmatrix}\bF_t\\M_t\end{pmatrix}=\,&\begin{pmatrix}R_{\theta_t}&\bzero\\\bzero^\top&1\end{pmatrix}\left[\begin{pmatrix}A_t&\bb_t\\\bb_t^\top&c_t\end{pmatrix}\begin{pmatrix}R_{\theta_t}^{-1}\dot\bh_t\\\dot\theta_t\end{pmatrix}+\dot\sigma_t\begin{pmatrix}\tilde\bv_t^1\\ v_t^3\end{pmatrix}+\dot\gamma_t^+\begin{pmatrix}\tilde\bv_t^{2,+}\\0\end{pmatrix}+\dot\gamma_t^-\begin{pmatrix}\tilde\bv_t^{2,-}\\0\end{pmatrix}\right]\\
\eqqcolon& \begin{pmatrix}R_{\theta_t}&\bzero\\\bzero^\top&1\end{pmatrix}\left[\cR_t\begin{pmatrix}R_{\theta_t}^{-1}\dot\bh_t\\\dot\theta_t\end{pmatrix}+\dot\sigma_t\bv_t^1+\dot\gamma_t^+\bv_t^{2,+}+\dot\gamma_t^-\bv_t^{2,-}\right],
\end{split} 
\end{equation*}
where $\bv_t^1\coloneqq(\tilde\bv_t^1|v_t^3)$ and $\bv_t^{2,\pm}\coloneqq(\tilde\bv_t^{2,\pm}|0)$, and the $3\times3$ matrix $\cR_t=\cR(\sigma_t,\gamma_t^\pm;C_\parallel,C_\perp)$ is the Grand Resistance Matrix expressed in the swimmer's reference frame and, as we have seen before, it is invertible.
This yields
\begin{equation*}%\label{215}
\begin{pmatrix}R_{\theta_t}^{-1}\dot\bx_t\\\dot\theta_t\end{pmatrix}=-\cR_t^{-1}(\dot\sigma_t\bv_t^1+\dot\gamma_t^+\bv_t^{2,+}+\dot\gamma_t^-\bv_t^{2,-}),
\end{equation*}
which can be written in the form of a control system as
\begin{equation}\label{217_1}
\begin{pmatrix}R_{\theta_t}^{-1}\dot\bx_t\\ \dot\theta_t\\ \dot\sigma_t\\ \dot\gamma_t^+\\ \dot\gamma_t^-\end{pmatrix}=
\begin{pmatrix}-\cR_t^{-1}\bv_t^1\\ 1\\ 0\\ 0\end{pmatrix}u_1+
\begin{pmatrix}-\cR_t^{-1}\bv_t^{2,+}\\0\\1\\0\end{pmatrix}u_2+
\begin{pmatrix}-\cR_t^{-1}\bv_t^{2,-}\\0\\0\\1\end{pmatrix}u_3\eqqcolon\sum_{i=1}^3\bg_i(\sigma_t,\gamma_t^+,\gamma_t^-) u_i \,,
\end{equation}
where, for $i=1, 2, 3$, the maps $u_i\colon[0,T]\to\R{}$ are the control functions
identified with the time derivatives of the three shape parameters $\gamma_t^{\pm}$ and $\sigma_t$\,.
\begin{theorem}
In the regime~\eqref{ratio}, system \eqref{217_1} is totally controllable.
\end{theorem}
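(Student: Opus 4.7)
The plan is to invoke Corollary~\ref{gait_tot}. First I would observe that in \eqref{217_1} the shape dynamics are simply $(\dot\sigma_t,\dot\gamma_t^+,\dot\gamma_t^-)^\top=(u_1,u_2,u_3)^\top$, so the matrix $F$ in \eqref{system} is the $3{\times}3$ identity: constant in $s$ and of full rank $n=3$. The vector fields $\widetilde\bg_i$ are equivariant under the left $G=\SE{2}$-action, because the position block is written as $R_{\theta_t}^{-1}\dot\bh_t$, which is precisely the factored form $X^G=g\,\Xi(s)$ required by Definition~\ref{equiVF}. Both hypotheses of Corollary~\ref{gait_tot} are thus satisfied, and it suffices to verify the gait-controllability condition \eqref{abovecondition} at one conveniently chosen shape $s^\star$.

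Next, since $\dim\fg=\dim\mathfrak{se}(2)=3$, I would attempt to span $\fh_2(s^\star)$ using only the three second-order projections $\Pi_e^G[\widetilde\bg_i,\widetilde\bg_j]$ with $1\leq i<j\leq 3$, evaluated at a symmetric configuration $s^\star=(\sigma^\star,\gamma^\star,\gamma^\star)$, for instance $\sigma^\star=\pi/4$ and $\gamma^\star=1$. The symmetric choice exploits the reflection $+\leftrightarrow-$ across the bisector to reduce the Grand Resistance Matrix $\cR_{s^\star}$ to an almost block-diagonal form, so that its inverse and the vectors $\cR_{s^\star}^{-1}\bv_{s^\star}^{1}$ and $\cR_{s^\star}^{-1}\bv_{s^\star}^{2,\pm}$ can be written in closed form in terms of $\sigma^\star$ and of the drag ratio $\mu:=C_\perp/C_\parallel$. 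I would then compute the three brackets via formula~\eqref{brackets25} and project them onto $\fg$.

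Once these three vectors in $\mathfrak{se}(2)\simeq\R{3}$ are in hand, I would form the $3\times 3$ matrix whose columns are the projections and verify that its determinant does not vanish for any $\mu\in(1,2]$. Symmetry under $+\leftrightarrow-$ should predict the structure of the answer: $[\widetilde\bg_2,\widetilde\bg_3]$ is odd under the swap and should contribute the $\be_y$-translation and/or rotation direction, while $[\widetilde\bg_1,\widetilde\bg_2]$ and $[\widetilde\bg_1,\widetilde\bg_3]$, exchanged by the reflection, together supply the complementary $\be_x$-translation along the bisector. Should the symmetric choice render one of the three second-order brackets dependent on the others, I would replace it by the third-order bracket $[\widetilde\bg_1,[\widetilde\bg_2,\widetilde\bg_3]]\in H_3(s^\star)$, and conclude via the weaker $\fh_2(s^\star)+\fh_3(s^\star)=\fg$.

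The main obstacle I anticipate is the explicit inversion of $\cR_{s^\star}$ and the subsequent tracking of how the key determinant depends on $\mu$. The isotropic drag $\mu=1$ is the classical Scallop-Theorem degeneracy, so the determinant should factor as $(\mu-1)$ (or a power thereof) times an expression that must be shown to be strictly positive throughout the physical regime $\mu\in(1,2]$; verifying this non-vanishing, possibly with the aid of a symbolic computation, is the decisive step. Total controllability then follows at once from Corollary~\ref{gait_tot}.
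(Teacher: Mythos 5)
Your proposal follows essentially the same route as the paper: verify that $F=\mathbb{I}_3$ is constant of full rank, compute the three second-order brackets $[\bg^1,\bg^2]$, $[\bg^1,\bg^3]$, $[\bg^2,\bg^3]$ at the symmetric shape $\sigma^\star=\pi/4$, $\gamma^+=\gamma^-$, check the nonvanishing of the $3\times3$ determinant of their $\fg$-projections (which indeed carries the factor $(C_\perp-C_\parallel)^2$, so degenerates exactly at the isotropic ratio $\mu=1$ as you anticipated), and conclude by Corollary~\ref{gait_tot}. The only remaining work is the explicit bracket computation, and no third-order brackets turn out to be needed.
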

\begin{proof}

We compute the Lie brackets $\bg_t^4\coloneqq[\bg_t^1,\bg_t^2]$, $\bg_t^5\coloneqq[\bg_t^1,\bg_t^3]$, and $\bg_t^6\coloneqq[\bg_t^2,\bg_t^3]$ at the specific values $\sigma_\star=\pi/4$, $\gamma^{+}_{\star}=\gamma^{-}_{\star}=\ell>0$ obtaining the fields $\bg^i_\star\coloneqq \Pi^G_e(\bg^i(\sigma_\star,\gamma^\pm_\star))$ (see \eqref{Pi_g}), for $i=4,5,6$,
\begin{equation}\label{218}
\begin{split}
\!\!\!\! \bg^4_\star= &\, \bigg(
\displaystyle \frac{-(C_\perp-C_\parallel)^2}{2\sqrt2(C_\parallel+C_\perp)^2},
\displaystyle \frac{16C_\parallel^3-8C_\parallel^2C_\perp-23C_\parallel C_\perp^2-2C_\perp^3}{2\sqrt2(C_\parallel+C_\perp)(C_\perp+4C_\parallel)^2},
\displaystyle \frac{3(8C_\parallel^3+10C_\parallel^2C_\perp+9C_\parallel C_\perp^2+C_\perp^3)}{2\ell(C_\parallel+C_\perp)(C_\perp+4C_\parallel)^2}
\bigg)^\top,\\
\!\!\!\! \bg^5_\star= &\, \bigg(
\displaystyle \frac{-(C_\perp-C_\parallel)^2}{2\sqrt2(C_\parallel+C_\perp)^2},
\displaystyle -\frac{16C_\parallel^3-8C_\parallel^2C_\perp-23C_\parallel C_\perp^2-2C_\perp^3}{2\sqrt2(C_\parallel+C_\perp)(C_\perp+4C_\parallel)^2},
\displaystyle -\frac{3(8C_\parallel^3+10C_\parallel^2C_\perp+9C_\parallel C_\perp^2+C_\perp^3)}{2\ell(C_\parallel+C_\perp)(C_\perp+4C_\parallel)^2}
\bigg)^\top,\\
\!\!\!\! \bg^6_\star= &\, \bigg(
0,
\displaystyle -\frac{C_\parallel(2C_\perp+3C_\parallel)}{2\sqrt2\ell(C_\parallel+C_\perp)(C_\perp+4C_\parallel)},
\displaystyle \frac{3C_\parallel(C_\parallel-C_\perp)}{2\ell^2(C_\parallel+C_\perp)(C_\perp+4C_\parallel)}
\bigg)^\top.
\end{split}
\end{equation}
To apply Corollary \ref{gait_tot}, we have to verify that the vector fields~$\bg^4_\star$,~$\bg^5_\star$, and~$\bg^6_\star$ are linearly independent so that they generate the Lie algebra $\fg$. 
The linear independence of these vector fields follows from the fact that 
$$\det\big(\bg_\star^4|\bg_\star^5|\bg_\star^6\big)=\frac{3C_\parallel^3(C_\perp-C_\parallel)^2(31C_\perp+4C_\parallel)}{4\ell^2(C_\perp+C_\parallel)^3(C_\perp+4C_\parallel)^3}\neq0
$$
since, owing to~\eqref{ratio}, $C_\perp\neq C_\parallel$.
Therefore, condition \eqref{abovecondition} is fulfilled and the systems is gait controllable by Theorem~\ref{th:gaitcontrollable}.
Moreover, we notice from \eqref{217_1} that the matrix $F=\mathbb{I}_3$  defining the equations of the shape variables is constant and with rank $3$. 
The total controllability of the system follows now from Corollary~\ref{gait_tot} and this concludes the proof.
\end{proof}

\subsection{Sliding links} \label{sec:sliding}
In this section, we consider a two-link swimmer whose body consists of an unstretchable wire of constant length $L>0$, bent at a hinge point $\bh_t$ in such a way that the two portions of the wire form an angle $2\sigma_t$ between them, with the relative position of the hinge with respect to the wire that can vary in time,  cf.~Figure~\ref{fig:sliding}. In this case we set $S^+_t\coloneqq[a_t,0]$ and $S^-_t\coloneqq[-L-a_t,0]$ for a  function $t\mapsto a_t\in (-L+\beta,-\beta)$ for a certain $\beta>0$ small enough (e.g., $\beta\in(0,L/4)$); moreover we let  $\gamma^\pm_t\equiv 1$.

\begin{figure}[tb]
	\begin{center}
	\begin{tikzpicture}[line join=round,>=stealth,x=5mm,y=5mm]
	\clip(-2,-6) rectangle (30,4.5);
	\draw [line width=2pt,color=Blue] (0,0)-- (1.2,2.4);
	\draw [line width=2pt,color=Blue] (5,-0.8)-- (6.6,2.4);
	\draw [line width=2pt,color=Blue] (10,-1.6)-- (12,2.4);
	\draw [line width=2pt,color=Blue] (0,0)-- (1.2,-2.4);
	\draw [line width=2pt,color=Blue] (5,-0.8)-- (5.8,-2.4);
	\draw [line width=2pt,color=Blue] (10,-1.6)-- (10.4,-2.4);
	\draw [line width=2pt,color=Blue] (15,0)-- (16.2,2.4);
	\draw [line width=2pt,color=Blue] (20,0.8)-- (21.6,-2.4);
	\draw [line width=2pt,color=Blue] (25,1.6)-- (27,-2.4);
	\draw [line width=2pt,color=Blue] (15,0)-- (16.2,-2.4);
	\draw [line width=2pt,color=Blue] (20,0.8)-- (20.8,2.4);
	\draw [line width=2pt,color=Blue] (25,1.6)-- (25.4,2.4);
	\draw [line width=0.8pt] (-1,3.5)-- (-1,-5);
	\draw [line width=0.8pt] (29,-5)-- (29,3.5);
	\draw [line width=0.8pt] (29,-5)-- (-1,-5);
	\draw [line width=0.8pt] (29,3.5)-- (-1,3.5);
	\draw [line width=0.8pt] (14,3.4)-- (14,-5);
	\draw [->,line width=3pt] (2.,0) -- (3.5,0);
	\draw [->,line width=3pt] (7.5,0) -- (9,0);	
	\draw [->,line width=3pt] (17.,0) -- (18.5,0);
	\draw [->,line width=3pt] (22.5,0) -- (24,0);
	\begin{scriptsize}
		\draw [fill=red] (0,0) circle (2.5pt);
		\draw [fill=red] (5.4,0) circle (2.5pt);
		\draw [fill=red] (10.8,0) circle (2.5pt);
		\draw  (0.4,0.8) node[diamond, inner sep=1.5pt, color=black,fill=white,draw] {};
		\draw  (5.8,0.8) node[diamond, inner sep=1.5pt, color=black,fill=white,draw] {};
		\draw  (11.2,0.8) node[diamond, inner sep=1.5pt, color=black,fill=white,draw] {};
		\draw  (0.4,-0.8) node[star,star points=5, star point height=2.3pt, inner sep=1pt, color=black,fill=Yellow, draw] {};
		\draw  (5,-0.8) node[star,star points=5, star point height=2.3pt, inner sep=1pt, color=black,fill=Yellow, draw] {};
		\draw  (10.4,-0.8) node[star,star points=5, star point height=2.3pt, inner sep=1pt, color=black,fill=Yellow, draw] {};
		\draw [fill=red] (15,0) circle (2.5pt);
		\draw [fill=red] (20.4,0) circle (2.5pt);
		\draw [fill=red] (25.8,0) circle (2.5pt);
		\draw  (15.4,0.8) node[diamond, inner sep=1.5pt, color=black,fill=white,draw] {};
		\draw  (20,0.8) node[diamond, inner sep=1.5pt, color=black,fill=white,draw] {};
		\draw  (25.4,0.8) node[diamond, inner sep=1.5pt, color=black,fill=white,draw] {};
		\draw  (15.4,-0.8) node[star,star points=5, star point height=2.3pt, inner sep=1pt, color=black,fill=Yellow, draw] {};
		\draw  (20.8,-0.8) node[star,star points=5, star point height=2.3pt, inner sep=1pt, color=black,fill=Yellow, draw] {};
		\draw  (26.2,-0.8) node[star,star points=5, star point height=2.3pt, inner sep=1pt, color=black,fill=Yellow, draw] {};
	\end{scriptsize}
	\draw (6.5,-4) node {\emph{Sliding links}: $\dot{a}_t^+<0$};
	\draw (21.5,-4) node {\emph{Sliding links}: $\dot{a}_t^+>0$};
\end{tikzpicture}
	\end{center}
	\caption{Illustration of the elongation-type shape changes of the \emph{sliding links} model of Section~\ref{sec:sliding}. The red bullet, white square and yellow star represent the position of some exemplifying material points in consecutive steps.} \label{fig:sliding}
\end{figure}
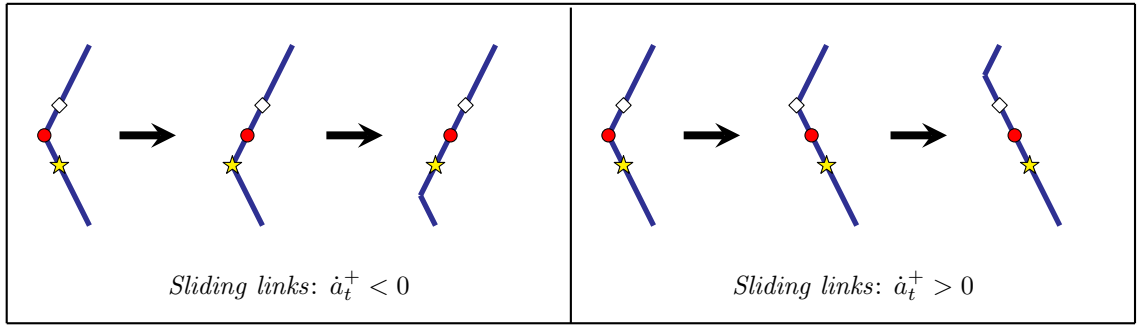

Therefore \eqref{G04}, \eqref{G05}, and \eqref{G06} become
%\begin{subequations}
\begin{align*}
\bx_t^+(\eta)=&\bh_t+(\eta-a_t)\be_t^+\,,\qquad\bx_t^-(\eta)=\bh_t+(\eta+L+a_t)\be_t^-\,,\\
(\bx_t^\pm)'(\eta)=&\, \be_t^\pm\,, \\
\bv(\bx_t^+(\eta))=&\, 
R_{\theta_t}\bigg(R^{-1}_{\theta_t}\dot\bh_t-\dot a_tR_{\sigma_t}\be_1+(\eta-a_t)(\dot\theta_t+\dot\sigma_t) R_{\sigma_t}\be_2\bigg),\\
\bv(\bx_t^-(\eta))=&\, R_{\theta_t}\bigg(R^{-1}_{\theta_t}\dot\bh_t+\dot a_tR_{-\sigma_t}\be_1+(\eta+L+a_t)(\dot\theta_t-\dot\sigma_t) R_{-\sigma_t}\be_2\bigg),\\
 \bF_t^+=&\, a_t R_{\theta_t} \bigg[R_{\sigma_t}J R_{\sigma_t}^{-1}R_{\theta_t}^{-1} \dot\bh_t -C_\parallel \dot a_tR_{\sigma_t}\be_1-\frac{C_\perp}2 a_t(\dot\theta_t+\dot\sigma_t) R_{\sigma_t}\be_2 \bigg], \\
  \bF_t^-=&\, (L+a_t) R_{\theta_t} \bigg[R_{-\sigma_t}J R_{-\sigma_t}^{-1}R_{\theta_t}^{-1} \dot\bh_t +C_\parallel \dot a_tR_{-\sigma_t}\be_1+\frac{C_\perp}2 (L+a_t)(\dot\theta_t-\dot\sigma_t) R_{-\sigma_t}\be_2 \bigg], \\
\bM_t^+=& \bigg[-\frac{C_\perp}2 a_t^2 (R_{\sigma_t}\be_2){\cdot}(R_{\theta_t}^{-1}\dot\bh_t) +\frac{C_\perp}3 a_t^3(\dot\theta_t+\dot\sigma_t)\bigg]\be_3\,,
\\
\bM_t^-=& \bigg[\frac{C_\perp}2(L+a_t)^2 (R_{-\sigma_t}\be_2){\cdot}(R_{\theta_t}^{-1}\dot\bh_t) +\frac{C_\perp}3(L+a_t)^3(\dot\theta_t-\dot\sigma_t)\bigg]\be_3\,.
\end{align*}
%\end{subequations}

The total balance of forces and torques acting on the swimmer \eqref{G07} becomes
\begin{equation}\label{207_2}
\begin{cases}
\bF_t=R_{\theta_t}\big[ A_t R_{\theta_t}^{-1}\dot\bh_t + \dot\theta_t\bb_t+\dot\sigma_t\tilde\bv^1_t+\dot a_t\tilde\bv^2_t\big]=\bzero\,,\\
 M_t=\bb_t\cdot(R_{\theta_t}^{-1}\dot\bh_t)+ c_t\dot\theta_t+w_t\dot\sigma_t=0\,,
\end{cases}
\end{equation}
where
\begin{equation*}%\label{208_2}
\begin{split}
A_t=&\, A(\sigma_t,a_t;C_\parallel,C_\perp)\coloneqq a_t R_{\sigma_t}JR_{\sigma_t}^{-1}+(L+a_t)R_{\sigma_t}^{-1}JR_{\sigma_t}\,, \\ 
\bb_t=&\, \bb(\sigma_t,a_t;C_\perp)\coloneqq \frac{C_\perp}2\big((L+a_t)^2R_{\sigma_t}^{-1}-(a_t)^2R_{\sigma_t}\big)\be_2\,, \\
\tilde\bv^1_t=&\, \tilde\bv^1(\sigma_t,a_t;C_\perp)\coloneqq -\frac{C_\perp}2\big((a_t)^2R_{\sigma_t}+(L+a_t)^2R_{\sigma_t}^{-1}\big)\be_2\,, \\ 
\tilde\bv^{2}_t=&\, \tilde\bv^2(\sigma_t,a_t;C_\parallel)\coloneqq C_\parallel\big((L+a_t) R_{\sigma_t}^{-1}-a_t R_{\sigma_t}\big)\be_1\,,\\
c_t=&c(a_t;C_\perp)\coloneqq \frac{C_\perp}3[(a_t)^3+(L+a_t)^3],\\
w_t=&w(a_t;C_\perp)\coloneqq \frac{C_\perp}3[(a_t)^3-(L+a_t)^3]. 
\end{split}
\end{equation*}
The expressions in \eqref{207_2} can be gathered in matrix form as
\begin{equation*}
\begin{split}
\begin{pmatrix}\bF_t\\M_t\end{pmatrix}=\,&\begin{pmatrix}R_{\theta_t}&\bzero\\\bzero^\top&1\end{pmatrix}\left[\begin{pmatrix}A_t&\bb_t\\\bb_t^\top&c_t\end{pmatrix}\begin{pmatrix}R_{\theta_t}^{-1}\dot\bh_t\\\dot\theta_t\end{pmatrix}+\dot\sigma_t\begin{pmatrix}\tilde\bv_t^1\\ w_t\end{pmatrix}+\dot a_t\begin{pmatrix}\tilde\bv_t^{2}\\0\end{pmatrix}\right]\\
\eqqcolon& \begin{pmatrix}R_{\theta_t}&\bzero\\\bzero^\top&1\end{pmatrix}\left[\cR_t\begin{pmatrix}R_{\theta_t}^{-1}\dot\bh_t\\\dot\theta_t\end{pmatrix}+\dot\sigma_t\bv_t^1+\dot a_t\bv_t^{2}\right],
\end{split} 
\end{equation*}
where, as in the previous section, $\bv_t^1\coloneqq(\tilde\bv_t^1|w_t)$ and $\bv_t^{2}\coloneqq(\tilde\bv_t^{2}|0)$, and the $3\times3$ matrix $\cR_t=\cR(\sigma_t, a_t;C_\parallel,C_\perp)$ is the Grand Resistance Matrix expressed in the swimmer's reference frame and, as we have seen before, it is invertible. This yields
\begin{equation*}%\label{215_2}
\begin{pmatrix}R_{\theta_t}^{-1}\dot\bx_t\\\dot\theta_t\end{pmatrix}=-\cR_t^{-1}(\dot\sigma_t\bv_t^1+\dot a_t\bv_t^{2}),
\end{equation*}
which can be written in the form of a control system as
\begin{equation}\label{217_2}
\begin{pmatrix}R_{\theta_t}^{-1}\dot\bx_t\\ \dot\theta_t\\ \dot\sigma_t\\ \dot a_t\end{pmatrix}=
\begin{pmatrix}-\cR_t^{-1}\bv_t^1\\ 1\\ 0\end{pmatrix}u_1+
\begin{pmatrix}-\cR_t^{-1}\bv_t^{2}\\0\\1\end{pmatrix}u_2
\eqqcolon\sum_{i=1}^2\bg_i(\sigma_t, a_t) u_i \,,
\end{equation}
where, for $i=1, 2$, the maps $u_i\colon[0,T]\to\R{}$ are the control functions identified with the time derivatives of the two shape parameters $\sigma_t$ and $a_t$\,. 
\begin{theorem}
In the regime~\eqref{ratio}, system \eqref{217_2} is totally controllable.
\end{theorem}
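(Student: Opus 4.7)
The plan is to invoke Corollary~\ref{gait_tot}. Looking at system~\eqref{217_2}, the shape equations read $(\dot\sigma_t,\dot a_t)^\top=(u_1,u_2)^\top$, so the matrix $F$ governing the shape dynamics is $F=\mathbb{I}_2$, which is constant in $s$ and has rank $n=2$. Thus, once gait controllability at some reference shape $s^\star\in\cS$ is established via Theorem~\ref{th:gaitcontrollable}, total controllability will follow automatically.

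To verify hypothesis~\eqref{abovecondition}, I would fix a reference shape $s^\star=(\sigma_\star,a_\star)$ with $\sigma_\star=\pi/4$ and $a_\star\neq -L/2$ (the symmetric value $a_\star=-L/2$ risks creating parity-induced cancellations between the two link contributions that could make some brackets collinear). A natural choice is $a_\star=-L/3$ or $a_\star=-L/4$. Since here we have only $m=2$ control vector fields, the first-bracket subspace $\fh_2(s^\star)$ is generated by the single vector $\Pi^G_e[\widetilde Z_1,\widetilde Z_2]$, so it is at most one-dimensional. Because $\dim\fg=\dim\mathfrak{se}(2)=3$, two further linearly independent directions must come from $\fh_3(s^\star)$, generated by the second-order brackets
\begin{equation*}
\Pi^G_e\big[\widetilde Z_1,[\widetilde Z_1,\widetilde Z_2]\big]\qquad\text{and}\qquad \Pi^G_e\big[\widetilde Z_2,[\widetilde Z_1,\widetilde Z_2]\big].
\end{equation*}
The plan is therefore to compute these three projected brackets explicitly at $s^\star$ and show that the resulting $3\times 3$ matrix has nonzero determinant. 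As in the previous section, the determinant will turn out to be a rational function of $C_\parallel,C_\perp,L,a_\star$ whose numerator contains a nontrivial polynomial factor in $(C_\perp-C_\parallel)$; under the regime~\eqref{ratio}, this factor is nonzero since $C_\parallel\neq C_\perp$, yielding the required linear independence.

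The main obstacle is purely computational: the vector fields $\bg_i=(-\cR_t^{-1}\bv_t^i,\be_i)^\top$ involve the inverse of the $3\times 3$ Grand Resistance Matrix $\cR_t=\cR(\sigma_t,a_t;C_\parallel,C_\perp)$, whose entries are themselves polynomial in $\sigma_t$ and $a_t$. Differentiating $\cR_t^{-1}$ twice (as required for the second-order brackets) via $\partial \cR_t^{-1}=-\cR_t^{-1}(\partial \cR_t)\cR_t^{-1}$ and then evaluating at $s^\star$ produces lengthy rational expressions that are best handled with a computer algebra system, exactly as done in~\eqref{218} above. Provided the evaluation point $a_\star$ is chosen generically (in particular away from the symmetric value $-L/2$ and outside the prescribed $\beta$-margin), the determinant does not identically vanish as a function of $C_\perp/C_\parallel$, and~\eqref{ratio} excludes the only degenerate value $C_\perp=C_\parallel$. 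With the linear independence at hand, Theorem~\ref{th:gaitcontrollable} yields gait controllability at $s^\star$, and Corollary~\ref{gait_tot} upgrades it to total controllability of~\eqref{217_2}.
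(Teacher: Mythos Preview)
Your overall strategy matches the paper's: verify condition~\eqref{abovecondition} at a chosen reference shape using the first bracket $[\bg_1,\bg_2]$ together with the two second-order brackets $[\bg_1,[\bg_1,\bg_2]]$ and $[\bg_2,[\bg_1,\bg_2]]$, then invoke Corollary~\ref{gait_tot} via $F=\mathbb{I}_2$. That is exactly how the paper proceeds.

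Where you diverge is in the choice of reference shape and in what you expect the regime~\eqref{ratio} to do. The paper evaluates at the \emph{symmetric} point $a_\star=-L/2$, precisely the value you propose to avoid. Your worry about parity cancellations is half right: at $a_\star=-L/2$ the projected brackets $\bg^3_\star$ and $\bg^4_\star$ both land in the plane $\{0\}\times\R{2}$, while $\bg^5_\star$ is purely along the first axis. But this block structure is a feature, not a bug: it makes the $3\times3$ determinant factor cleanly as $p(C_\perp,C_\parallel)\,q(\kappa)$ with $\kappa=C_\perp/C_\parallel$, whereas at an asymmetric point such as $a_\star=-L/3$ the three brackets mix all coordinates and the determinant is far messier to analyze.

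More importantly, your expectation that the determinant vanishes only when $C_\perp=C_\parallel$ is too optimistic. In the paper's factorization, $p$ carries the $(C_\perp-C_\parallel)^2$ factor, but the remaining factor $q(\kappa)=5\kappa^5+145\kappa^4+458\kappa^3+752\kappa^2+160\kappa-256$ has $q(0)=-256<0$ and hence a root in $(0,1)$; one genuinely needs the whole interval $\kappa\in(1,2]$, not merely $\kappa\neq1$, to conclude $q>0$. So the regime~\eqref{ratio} is used in full strength, and your plan should anticipate checking a sign condition on an extra polynomial factor rather than relying solely on $C_\perp\neq C_\parallel$.
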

\begin{proof}
In order to apply Corollay \ref{gait_tot} we compute the Lie brackets $\bg_t^3\coloneqq[\bg_t^1,\bg_t^2]$, $\bg_t^4\coloneqq[\bg_t^1,\bg_t^3]$ and $\bg_t^5\coloneqq[\bg_t^2,\bg_t^3$], at the specific values $\sigma_\star=\pi/4$, $a_\star=-\frac{L}2$, obtaining the fields $\bg_\star^i\coloneqq \Pi^G_e(\bg_i(\sigma_\star,a_\star))$ (see \eqref{Pi_g}), for $i=3,4,5$,
\begin{equation}\label{218_slide}
\begin{split}
\bg^3_\star= &\, \begin{pmatrix}0, &
\displaystyle \frac{\sqrt{2}(C_\perp^3+19 C_\perp^2C_\parallel+8C_\perp C_\parallel^2-16C_\parallel^3)}{(C_\parallel+C_\perp)(C_\perp+4C_\parallel)^2}, &
\displaystyle- \frac{6C_\perp(C_\perp^2+11C_\perp C_\parallel+4C_\parallel^2)}{L(C_\parallel+C_\perp)(C_\perp+4C_\parallel)^2}\end{pmatrix}^\top\\
\bg^4_\star= &\, \begin{pmatrix}
0\\[2mm]
\displaystyle \frac{5C_\perp^5+117C_\perp^4C_\parallel-466C_\perp^3C_\parallel^2-1128C_\perp^2C_\parallel^3+32C_\perp C_\parallel^4+384C_\parallel^5}{\sqrt2(C_\parallel+C_\perp)^2(C_\perp+4C_\parallel)^3}\\[4mm]
\displaystyle \frac{24C_\parallel(C_\perp^4+65C_\perp^3C_\parallel+110C_\perp^2C_\parallel^2+32C_\perp C\parallel^3-32C_\parallel^4}{L(C_\parallel+C_\perp)^2(C_\perp+4C_\parallel)^3}
\end{pmatrix},\\
\bg^5_\star= &\, \begin{pmatrix}
\displaystyle \frac{(2\sqrt2(C_\perp-C_\parallel)(3C_\perp^3+23C_\perp^2C_\parallel+74C_\perp C_\parallel^2+72C_\parallel^3}{L(C_\parallel+C_\perp)^2(C_\perp+4C_\parallel)^2}, & 0, &0 \end{pmatrix}^\top
\end{split}
\end{equation}
To apply Corollary \ref{gait_tot}, we have to verify that the vector fields $\bg^3_\star$\,, $\bg^4_\star$, and $\bg^5_\star$ are linearly independent so that they generate the Lie algebra $\fg$.
An explicit computation yields
$$
\det(\bg_\star^3|\bg_\star^4|\bg_\star^5)=p(C_\perp,C_\parallel)q(C_\perp,C_\parallel),
$$
where
$$
p(C_\perp,C_\parallel)\coloneqq\frac{12C_\parallel^5(C_\perp-C_\parallel)^2(3C_\perp^3+23C_\perp^2 C_\parallel+74C_\perp C_\parallel^2+72C_\parallel^3)}{L^2(C_\parallel+C_\perp)^5(C_\perp+4C_\parallel)^5},
$$
and, denoting $\kappa\coloneqq C_\perp/C_\parallel$\,,
$$
q(\kappa)\coloneqq 5\kappa^5+145\kappa^4+458\kappa^3+752\kappa^2+160\kappa-256.
$$
In the regime~\eqref{ratio}, we have that  $p(C_\perp,C_\parallel)>0$ and $q(\kappa)>0$. Thus, the determinant is always different from zero and the linear independence of $\bg^3_\star$\,, $\bg^4_\star$, and $\bg^5_\star$ follows.
Therefore, condition \eqref{abovecondition} is fulfilled and the systems is gait controllable by Theorem~\ref{th:gaitcontrollable}.
Moreover note that from  \eqref{217_2} the matrix $F$ defining the equations of the shape variables is constant and with rank $2$. Therefore we can apply Corollary \ref{gait_tot} to conclude that the system is totally controllable.
\end{proof}

\subsection{Growing links}\label{sec:growing}

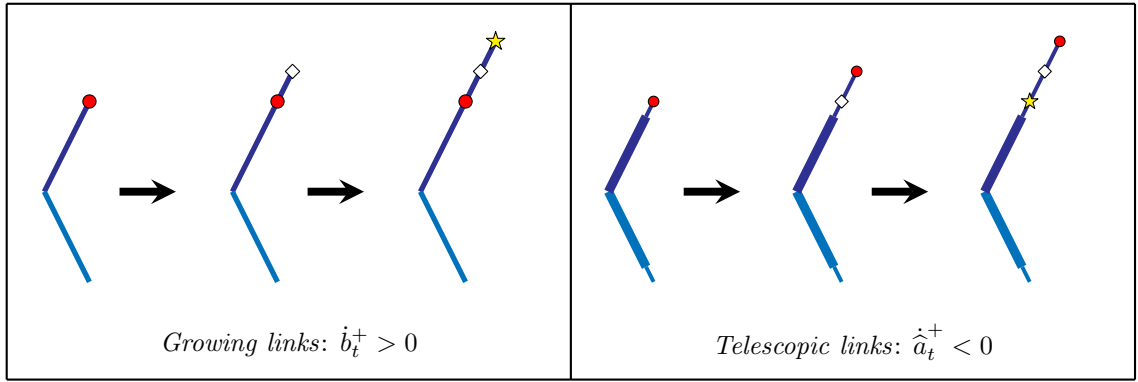
\begin{figure}[tb]
	\begin{center}
	\begin{tikzpicture}[line join=round,>=stealth,x=5mm,y=5mm]
	\clip(-2,-6) rectangle (30,6);
	\draw [line width=2pt,color=Blue] (0,0)-- (1.2,2.4);
	\draw [line width=2pt,color=Blue] (5,0)-- (6.6,3.2);
	\draw [line width=2pt,color=Blue] (10,0)-- (12,4);
	\draw [line width=2pt,color=RoyalBlue] (0,0)-- (1.2,-2.4);
	\draw [line width=2pt,color=RoyalBlue] (5,0)-- (6.2,-2.4);
	\draw [line width=2pt,color=RoyalBlue] (10,0)-- (11.2,-2.4);
	\draw [line width=0.8pt] (-1,5)-- (-1,-5);
	\draw [line width=0.8pt] (29,-5)-- (29,5);
	\draw [line width=0.8pt] (29,-5)-- (-1,-5);
	\draw [line width=0.8pt] (29,5)-- (-1,5);
	\draw [line width=0.8pt] (14,5)-- (14,-5);
	\draw [->,line width=3pt] (2.,0) -- (3.5,0);
	\draw [->,line width=3pt] (7.,0) -- (8.5,0);	
	\draw [->,line width=3pt] (17.,0) -- (18.5,0);
	\draw [->,line width=3pt] (22.,0) -- (23.5,0);
	\draw [line width=3.5pt,color=RoyalBlue] (15,0)-- (16,-2);
	\draw [line width=3.5pt,color=RoyalBlue] (20,0)-- (21,-2);
	\draw [line width=3.5pt,color=RoyalBlue] (25,0)-- (26,-2);
	\draw [line width=1.5pt,color=RoyalBlue] (16,-2) -- (16.2,-2.4);
	\draw [line width=1.5pt,color=RoyalBlue] (21,-2)-- (21.2,-2.4);
	\draw [line width=1.5pt,color=RoyalBlue] (26,-2)-- (26.2,-2.4);
	\draw [line width=3.5pt,color=Blue] (15,0)-- (16,2);
	\draw [line width=1.5pt,color=Blue] (16,2)-- (16.2,2.4);
	\draw [line width=3.5pt,color=Blue] (20,0)-- (21,2);
	\draw [line width=1.5pt,color=Blue] (21,2)-- (21.6,3.2);
	\draw [line width=3.5pt,color=Blue] (25,0)-- (26,2);
	\draw [line width=1.5pt,color=Blue] (26,2)-- (27,4);
	\begin{scriptsize}
		\draw [fill=red] (1.2,2.4) circle (2.5pt);
		\draw [fill=red] (6.2,2.4) circle (2.5pt);
		\draw  (6.6,3.2) node[diamond, inner sep=1.4pt, color=black,fill=white, draw] {};
		\draw [fill=red] (11.2,2.4) circle (2.5pt);
		\draw  (11.6,3.2) node[diamond, inner sep=1.4pt, color=black,fill=white, draw] {};
		\draw [] (12,4) node[star,star points=5, inner sep=1pt,  star point height=2.3pt, color=black,fill=Yellow, draw] {};
		\draw [fill=red] (16.2,2.4) circle (2.pt);
		\draw [fill=white] (21.2,2.4)node[diamond, inner sep=1.3pt, color=black,fill=white, draw] {};
		\draw [fill=red] (21.6,3.2) circle (2.pt);
		\draw  (26.2,2.4) node[star,star points=5,  star point height=2pt, inner sep=0.9pt, color=black,fill=Yellow, draw] {};
		\draw (26.6,3.2) node[diamond, inner sep=1.3pt, color=black,fill=white, draw] {};
		\draw [fill=red] (27,4) circle (2.pt);
	\end{scriptsize}
	\draw (6.5,-4) node {\emph{Growing links}: $\dot b_t^+>0$};
	\draw (21.5,-4) node {\emph{Telescopic links}: $\dot{\widehat{a}}_t^+<0$};
\end{tikzpicture}
	\end{center}
	\caption{Illustration of the elongation of the $+$-link in the \emph{growing links} model of Section~\ref{sec:growing} (left) and in the \emph{telescopic links} model of Section~\ref{sec:telescopic} (right). The red bullet, white square and yellow star represent the position of some exemplifying material points in consecutive steps. The appearance of additional material points in successive steps is due to the eversion/protusion mechanism.} \label{fig:grow_tele}
\end{figure}

In this section we consider a two-link swimmer capable of changing the lengths of the links by growing at the two tips opposite to the hinge,  cf.~Figure~\ref{fig:grow_tele}.
In this case we set $S^+_t\coloneqq[0,b_t^+]$ and $S^-_t\coloneqq[0,b^-_t]$ with $b_t^\pm\in(0,L^\mathrm{max})$; moreover we let  $\gamma^\pm_t\equiv 1$. Therefore \eqref{G04}, \eqref{G05}, and \eqref{G06} become
%\begin{subequations}
\begin{align*}
		\bx_t^+(\eta)=&\bh_t+\eta\be_t^+\,, \qquad\bx_t^-(\eta)=\bh_t+\eta\be_t^-\,,\\
		(\bx_t^\pm)'(\eta)=&\, \be_t^\pm \,, \\
		\bv(\bx_t^+(\eta))=&\,  
R_{\theta_t}\bigg(R^{-1}_{\theta_t}\dot\bh_t+\eta(\dot\theta_t+\dot\sigma_t) R_{\sigma_t}\be_2\bigg) ,\\
\bv(\bx_t^-(\eta))=&\, R_{\theta_t}\bigg(R^{-1}_{\theta_t}\dot\bh_t+\eta(\dot\theta_t-\dot\sigma_t) R_{-\sigma_t}\be_2\bigg),\\
 \bF_t^+=&\, b_t^+ R_{\theta_t} \bigg[R_{\sigma_t}J R_{\sigma_t}^{-1}R_{\theta_t}^{-1} \dot\bh_t-\frac{C_\perp}2 b_t^+(\dot\theta_t+\dot\sigma_t) R_{\sigma_t}\be_2 \bigg], \\
  \bF_t^-=&\, b_t^- R_{\theta_t} \bigg[R_{-\sigma_t}J R_{-\sigma_t}^{-1}R_{\theta_t}^{-1} \dot\bh_t +\frac{C_\perp}2 b_t^-(\dot\theta_t-\dot\sigma_t) R_{-\sigma_t}\be_2 \bigg], \\
  \bM_t^+=& \bigg[-\frac{C_\perp}2 (b_t^+)^2 (R_{\sigma_t}\be_2){\cdot}(R_{\theta_t}^{-1}\dot\bh_t) +\frac{C_\perp}3 (b_t^+)^3(\dot\theta_t+\dot\sigma_t)\bigg]\be_3\,,\\
\bM_t^-=& \bigg[\frac{C_\perp}2(b_t^-)^2 (R_{-\sigma_t}\be_2){\cdot}(R_{\theta_t}^{-1}\dot\bh_t) +\frac{C_\perp}3(b_t^-)^3(\dot\theta_t-\dot\sigma_t)\bigg]\be_3\,.
\end{align*}
%\end{subequations}
The total balance of forces and torques acting on the swimmer \eqref{G07} become
\begin{equation*}%\label{207_3}
\begin{cases}
\bF_t=R_{\theta_t}\big[ A_t R_{\theta_t}^{-1}\dot\bh_t + \dot\theta_t\bb_t+\dot\sigma_t\tilde\bv^1_t+\dot b_t^+\tilde\bv^2_t+\dot b_t^-\tilde\bv^3_t\big]=\bzero\,,\\
 M_t=\bb_t\cdot(R_{\theta_t}^{-1}\dot\bh_t)+ c_t\dot\theta_t+w_t\dot\sigma_t=0\,,
\end{cases}
\end{equation*}
where
\begin{equation*}%\label{208_2}
\begin{split}
A_t=&\, A(\sigma_t,b_t^\pm;C_\parallel,C_\perp)\coloneqq b_t^+ R_{\sigma_t}JR_{\sigma_t}^{-1}+b_t^-R_{\sigma_t}^{-1}JR_{\sigma_t}\,, \\ 
\bb_t=&\, \bb(\sigma_t,b_t^\pm;C_\perp)\coloneqq \frac{C_\perp}2\big((b_t^-)^2R_{\sigma_t}^{-1}-(b_t^+)^2R_{\sigma_t}\big)\be_2\,, \\
\tilde\bv^1_t=&\, \tilde\bv^1(\sigma_t,b_t^\pm;C_\perp)\coloneqq -\frac{C_\perp}2\big((b_t^+)^2R_{\sigma_t}+(b_t^-)^2R_{\sigma_t}^{-1}\big)\be_2\,, \\ 
\tilde\bv^{2}_t=&\tilde\bv^{3}_t=\, (0,0,0)^\top,\\
c_t=&c(b_t^\pm;C_\perp)\coloneqq \frac{C_\perp}3[(b_t^+)^3+(b_t^-)^3],\\
w_t=&w(b_t^\pm;C_\perp)\coloneqq \frac{C_\perp}3[(b_t^+)^3-(b_t^-)^3]. 
\end{split}
\end{equation*}
As in the previous sections we can gather together this expressions in a matrix form, and write  these equations as a control system as
\begin{equation}\label{217_3}
\begin{pmatrix}R_{\theta_t}^{-1}\dot\bx_t\\ \dot\theta_t\\ \dot\sigma_t\\ \dot b_t^+\\ \dot b_t^-\end{pmatrix}=
\begin{pmatrix}-\cR_t^{-1}\bv_t^1\\ 1\\ 0\\0\end{pmatrix}u_1+
\begin{pmatrix}-\cR_t^{-1}\bv_t^{2}\\0\\1\\0\end{pmatrix}u_2+
\begin{pmatrix}-\cR_t^{-1}\bv_t^{3}\\0\\0\\1\end{pmatrix}u_3
\eqqcolon\sum_{i=1}^3\bg_i(\sigma_t, b_t^\pm) u_i \,,
\end{equation}
where as in the previous sections $\bv_t^1\coloneqq(\tilde\bv_t^1|w_t)$, $\bv_t^{j}\coloneqq(\tilde\bv_t^{j}|0)$, for $j=2,3$ and $\cR_t^{-1}$ is the inverse of the Grand resistance matrix.
Here, for $i=1, 2, 3$, the maps $u_i\colon[0,T]\to\R{}$ are the control functions identified with the time derivatives of the three shape parameters $b_t^{\pm}$ and $\sigma_t$\,.

\begin{theorem}
System \eqref{217_3} is totally controllable.
\end{theorem}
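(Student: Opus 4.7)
The plan is to apply Corollary \ref{gait_tot} along the lines of the previous two theorems. From \eqref{217_3} one reads off directly that the shape matrix $F=\mathbb{I}_3$ is constant with rank $n=3=\dim\cS$, so it suffices to verify condition \eqref{abovecondition} at a single chosen shape $s^\star$, i.e., to exhibit Lie brackets of $\bg_1,\bg_2,\bg_3$ whose projections onto $\fg$ span a three-dimensional subspace.

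A distinctive structural feature of this model is that $\tilde\bv_t^2\equiv\tilde\bv_t^3\equiv\bzero$: tip-growth generates no hydrodynamic reaction, so $\bg_2$ and $\bg_3$ coincide with the coordinate fields $\partial/\partial b^+$ and $\partial/\partial b^-$. In particular $[\bg_2,\bg_3]=0$, and any iterated bracket of $\bg_2$ or $\bg_3$ with a field $X$ reduces to a partial derivative of $X$ in $b^+$ or $b^-$. The natural candidates to generate $\fg$ are therefore $\bg_4\coloneqq[\bg_1,\bg_2]=-\partial_{b^+}\bg_1$, $\bg_5\coloneqq[\bg_1,\bg_3]=-\partial_{b^-}\bg_1$, and a third-order bracket such as $\bg_6\coloneqq[\bg_2,\bg_5]=-\partial^2_{b^+b^-}\bg_1$ (alternatively $[\bg_1,\bg_4]$). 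I would evaluate these at the symmetric shape $s^\star=(\pi/4,\ell,\ell)$; by the reflection symmetry $+\leftrightarrow -$ of the swimmer, $\Pi^G_e(\bg_4)$ and $\Pi^G_e(\bg_5)$ should span a two-dimensional subspace of $\fg$, with $\Pi^G_e(\bg_6)$ supplying the third independent direction. Once the $3\times 3$ determinant $\det\bigl(\Pi^G_e(\bg_4)\mid\Pi^G_e(\bg_5)\mid\Pi^G_e(\bg_6)\bigr)$ is verified to be nonzero, Theorem \ref{th:gaitcontrollable} yields gait controllability at $s^\star$, and Corollary \ref{gait_tot} upgrades it to total controllability.

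The main obstacle is algebraic rather than conceptual: one must invert the $3\times 3$ matrix $\cR_t$ explicitly and differentiate the rational expression $-\cR_t^{-1}\bv_t^1$ once or twice in $b^\pm$, then simplify at $s^\star$ and check the non-vanishing of the resulting determinant---a task best handled with a symbolic algebra system. A noteworthy feature is that, unlike in the stretching and sliding cases, the statement does not invoke the regime \eqref{ratio}; this suggests that the determinant for the growing links case should not factor through $(C_\perp-C_\parallel)$ and should remain nonzero on a wider parameter region, consistent with the intuition that tip-growth (where new material points appear from outside the fluid) breaks reciprocity at a more kinematic level than the mere redistribution of existing material. Should the symmetric evaluation accidentally yield an aligned triple, replacing $\bg_6$ by $[\bg_1,\bg_4]$ or slightly perturbing $s^\star$ would restore generic rank.
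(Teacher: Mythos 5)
Your overall strategy is exactly the paper's: note from \eqref{217_3} that $F=\mathbb{I}_3$ is constant of full rank, exhibit three brackets whose $\Pi^G_e$-projections span $\fg$ at $\sigma_\star=\pi/4$, $b^\pm_\star=L$, and invoke Theorem~\ref{th:gaitcontrollable} together with Corollary~\ref{gait_tot}. Your structural observations are also correct: since $\tilde\bv^2_t=\tilde\bv^3_t=\bzero$, the fields $\bg_2,\bg_3$ are the coordinate fields $\partial_{b^+},\partial_{b^-}$, so bracketing with them amounts to partial differentiation; and indeed no restriction to the regime~\eqref{ratio} is needed here, since the paper's determinant is positive for all $C_\perp,C_\parallel>0$.

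However, your specific choice of third bracket fails, and not by accident but by the very reflection symmetry you invoke. Let $\rho$ denote the action on $\fg\simeq\mathfrak{se}(2)$ of the reflection across the bisector, $\rho(v_x,v_y,\omega)=(v_x,-v_y,-\omega)$. Since $\dot\sigma$ is reflection-even and the reflection swaps $b^+$ with $b^-$, at the symmetric shape one has $\Pi^G_e(\bg_5)=\rho\,\Pi^G_e(\bg_4)$ (visible in~\eqref{218_grow}: the two vectors are of the form $(A,B,C)$ and $(A,-B,-C)$), whereas the mixed partial $\Pi^G_e([\bg_2,\bg_5])=-\partial^2_{b^+b^-}\Pi^G_e(\bg_1)$ is $\rho$-invariant and therefore lies in the one-dimensional $+1$ eigenspace $\spann\{(1,0,0)\}$ --- which already contains $\Pi^G_e(\bg_4)+\Pi^G_e(\bg_5)=(2A,0,0)$. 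Hence $\det\bigl(\Pi^G_e(\bg_4)\mid\Pi^G_e(\bg_5)\mid\Pi^G_e([\bg_2,\bg_5])\bigr)$ vanishes identically at $s^\star=(\pi/4,L,L)$, so your proposed triple cannot verify \eqref{abovecondition} there. The cure is to break the $+\leftrightarrow-$ symmetry in the third bracket, which is what the paper does by taking $\bg_6=[\bg_2,[\bg_1,\bg_2]]=-\partial^2_{b^+b^+}\bg_1$; your own fallbacks ($[\bg_1,\bg_4]$ or a perturbed $s^\star$) would also serve. Finally, even with a correct triple the argument is only complete once the determinant is actually evaluated and shown nonzero; you correctly identify this as the remaining algebraic task but do not carry it out.
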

\begin{proof}
We compute the Lie brackets $\bg_t^4\coloneqq[\bg_t^1,\bg_t^2]$, and $\bg_t^5\coloneqq[\bg_t^1,\bg_t^3]$, $\bg_t^6\coloneqq[\bg_t^2,\bg_t^4]$, at the specific values $\sigma_\star=\pi/4$, $b_\star^\pm=L$, obtaining the fields $\bg_\star^i\coloneqq \Pi^G_e(\bg_i(\sigma_\star,b_\star^\pm))$ (see \eqref{Pi_g}), for $i=4,5,6$,
\begin{equation}\label{218_grow}
\begin{split}
\bg^4_\star= &\, \bigg(\displaystyle- \frac{C_\perp}{2\sqrt{2}(C_\perp+C_\parallel)},
\displaystyle- \frac{C_\perp^2}{\sqrt{2}(C_\parallel+C_\perp)(C_\perp+4C_\parallel)},
\displaystyle\frac{3(C_\perp^2+3C_\perp C_\parallel+4C_\parallel^2)}{2L(C_\parallel+C_\perp)(C_\perp+4C_\parallel)}\bigg)^\top,\\
\bg^5_\star= &\, \bigg(
\displaystyle- \frac{C_\perp}{2\sqrt{2}(C_\perp+C_\parallel)},
\displaystyle\frac{C_\perp^2}{\sqrt{2}(C_\parallel+C_\perp)(C_\perp+4C_\parallel)},
\displaystyle-\frac{3(C_\perp^2+3C_\perp C_\parallel+4C_\parallel^2)}{2L(C_\parallel+C_\perp)(C_\perp+4C_\parallel)}\bigg)^\top,\\
\bg^6_\star= &\, \bigg(
\displaystyle \frac{C_\perp(3C_\perp^2+15C_\perp C_\parallel+20C_\parallel^2)}{2\sqrt{2}(C_\parallel+C_\perp)^2(C_\perp+4C_\parallel)},0,
\displaystyle -\frac{3(C_\perp^2+3C_\perp C_\parallel+4C_\parallel^2)}{2L^2(C_\parallel+C_\perp)(C_\perp+4C_\parallel)}\bigg)^\top.
\end{split}
\end{equation}
To apply Corollary \ref{gait_tot}, we have to verify that the vector fields $\bg^4_\star$\,, $\bg^5_\star$\,, and $\bg^6_\star$ are linearly independent so that they generate the Lie algebra $\fg$.
The linear independence of these vector fields follows from the fact that 
$$
\det(\bg_\star^4|\bg_\star^5|\bg_\star^6)=
\frac{3C_\perp^3(C_\perp^2+3C_\perp C_\parallel+4C_\parallel^2)}{4L^2(C_\parallel+C_\perp)^3(C_\perp+4C_\parallel)^2 }>0 \quad\text{for any $C_\perp$\,, $C_\parallel>0$.}
$$
Therefore, condition \eqref{abovecondition} is fulfilled and the systems is gait controllable by Theorem~\ref{th:gaitcontrollable}.
Moreover note that from equation \eqref{217_3} the matrix $F$ defining the equations of the shape variables is constant and with rank $3$. Therefore we can apply Corollary \ref{gait_tot} to conclude that the system is totally controllable.
\end{proof}

\subsection{Telescopic links} \label{sec:telescopic} 
In this section we consider a two-link swimmer in which each of the links has a telescopic structure, with the two outer sections being connected at the hinge, while the two inner sections, sliding inside the outer ones, are placed on the tip-side of the links. 
This situation might also be seen as each of the arms growing (or disappearing) at an intermediate point. 

To have a clear mathematical description, it is convenient to consider a slight generalization of the framework of Section~\ref{sec:generalmodel}, obtained by enlarging the set describing the reference configuration. More precisely, we now split the body of the swimmer into four intervals: two constant intervals $S^\pm=[0,L]$ for some $L>0$, describing the outer sections, and two variable intervals $\widehat{S}^\pm_t=[\widehat a^\pm_t,0]$ with $\widehat a^\pm_t< 0$ describing the exposed part of the inner sections,  cf.~Figure~\ref{fig:grow_tele}. No deformation occurs, so we set $\gamma_t^\pm=\widehat\gamma_t^\pm\equiv 1$ for every~$\eta$ and~$t$.

We can proceed in an analogous way to the previous examples, obtaining
%\begin{subequations}
\begin{align*}
		\bx_t^\pm (\eta)=&\bh_t+\eta\be_t^\pm\,, \qquad \widehat\bx_t^\pm(\eta)=\bh_t+(L+\eta-\widehat a_t^\pm)\be_t^\pm \,,\\
		(\bx_t^\pm)'(\eta)=&(\widehat\bx_t^\pm)'(\eta)\,= \be_t^\pm \,, \\
		\bv(\bx_t^+(\eta))=&\,  
R_{\theta_t}\bigg(R^{-1}_{\theta_t}\dot\bh_t+\eta(\dot\theta_t+\dot\sigma_t) R_{\sigma_t}\be_2\bigg),\\
\bv(\bx_t^-(\eta))=&\, R_{\theta_t}\bigg(R^{-1}_{\theta_t}\dot\bh_t+\eta(\dot\theta_t-\dot\sigma_t) R_{-\sigma_t}\be_2\bigg),\\
\bv(\widehat\bx_t^+(\eta))=&R_{\theta_t}\bigg(R^{-1}_{\theta_t}\dot\bh_t+(L+\eta-\widehat a_t^+)(\dot\theta_t+\dot\sigma_t) R_{\sigma_t}\be_2-\dot{\widehat a}_t^+R_{\sigma_t}\be_1\bigg),\\
\bv(\widehat\bx_t^-(\eta))=&R_{\theta_t}\bigg(R^{-1}_{\theta_t}\dot\bh_t+(L+\eta-\widehat a_t^-)(\dot\theta_t-\dot\sigma_t) R_{\sigma_t}\be_2-\dot{\widehat a}_t^-R_{-\sigma_t}\be_1\bigg),\\
\bF_t^\pm=&\, L R_{\theta_t} \bigg[R_{\pm\sigma_t}J R_{\pm\sigma_t}^{-1}R_{\theta_t}^{-1} \dot\bh_t\mp\frac{C_\perp}2 L(\dot\theta_t\pm\dot\sigma_t) R_{\pm\sigma_t}\be_2 \bigg], \\
\widehat\bF_t^\pm=&-\widehat a_t^\pm\,R_{\theta_t} \bigg[R_{\pm\sigma_t}J R_{\pm\sigma_t}^{-1}R_{\theta_t}^{-1} \dot\bh_t \mp C_\parallel\dot{ \widehat{a}}_t^\pm R_{\pm\sigma_t}\be_1+(2L-\widehat a_t^\pm)\frac{C_\perp}2 (\dot\theta_t\pm\dot\sigma_t) R_{\pm\sigma_t}\be_2 \bigg], \\
 \bM_t^\pm=& \bigg[\mp\frac{C_\perp}2 L^2 (R_{\pm\sigma_t}\be_2){\cdot}(R_{\theta_t}^{-1}\dot\bh_t) +\frac{C_\perp}3 L^3(\dot\theta_t\pm\dot\sigma_t)\bigg]\be_3\,,\\
\widehat \bM_t^\pm=& \bigg[-\frac{C_\perp}2 ((L-\widehat a^\pm_t)^2-L^2) (R_{\pm\sigma_t}\be_2){\cdot}(R_{\theta_t}^{-1}\dot\bh_t) +\frac{C_\perp}3 ((L-\widehat a_t^\pm)^3-L^3)(\dot\theta_t\pm\dot\sigma_t)\bigg]\be_3\,.
\end{align*}
%\end{subequations}
The total balance of forces and torques acting on the swimmer \eqref{G07} become
\begin{equation*}%\label{207_3}
\begin{cases}
\bF_t=R_{\theta_t}\big[ A_t R_{\theta_t}^{-1}\dot\bh_t + \dot\theta_t\bb_t+\dot\sigma_t\tilde\bv^1_t+\dot {\widehat a}_t^+\tilde\bv^2_t+\dot {\widehat a}_t^-\tilde\bv^3_t\big]=\bzero \,,\\
 M_t=\bb_t\cdot(R_{\theta_t}^{-1}\dot\bh_t)+ c_t\dot\theta_t+w_t\dot\sigma_t=0\,,
\end{cases}
\end{equation*}
where
\begin{equation*}%\label{208_3}
\begin{split}
A_t=&\, A(\sigma_t,\widehat a^\pm_t;C_\parallel,C_\perp)\coloneqq L [R_{\sigma_t}JR_{\sigma_t}^{-1}+R_{-\sigma_t}JR_{-\sigma_t}^{-1}]-\widehat a^+_tR_{\sigma_t}^{-1}JR_{\sigma_t}-\widehat a^-_tR_{-\sigma_t}^{-1}JR_{-\sigma_t}\,, \\ 
\bb_t=&\, \bb(\sigma_t,\widehat a_t^\pm;C_\perp)\coloneqq \frac{C_\perp}2\big((L^2-\widehat a^-_t(2L-\widehat a_t^-))R_{\sigma_t}^{-1}-(L^2-\widehat a^+_t(2L-\widehat a_t^+))R_{\sigma_t}\big)\be_2\,, \\
\tilde\bv^1_t=&\, \tilde\bv^1(\sigma_t,,\widehat a_t^\pm;C_\perp)\coloneqq -\frac{C_\perp}2\big((L^2+\widehat a^+_t(2L-\widehat a_t^+))R_{\sigma_t}+(L^2-\widehat a^-_t(2L-\widehat a_t^-))R_{\sigma_t}^{-1}\big)\be_2\,, \\ 
\tilde\bv^{2}_t=&\, \tilde\bv^2(\sigma_t,\widehat a_t^+;C_\parallel)\coloneqq- C_\parallel R_{\sigma_t}\be_1\,,\\
\tilde\bv^{3}_t=&\, \tilde\bv^2(\sigma_t,\widehat a_t^-;C_\parallel)\coloneqq C_\parallel R_{-\sigma_t}\be_1\,,\\
c_t=&c(\widehat a_t^\pm;C_\perp)\coloneqq \frac{C_\perp}3[((L-\widehat a_t^+)^3+(L-\widehat a_t^-)^3],\\
w_t=&w(a_t;C_\perp)\coloneqq \frac{C_\perp}3((L-\widehat a_t^+)^3-(L-\widehat a_t^-)^3]. 
\end{split}
\end{equation*}
Gathering together the previous expressions, the equations of motion can be written in matrix form as
\begin{equation}\label{217_4}
\begin{pmatrix}R_{\theta_t}^{-1}\dot\bx_t\\ \dot\theta_t\\ \dot\sigma_t\\ \dot{\widehat a}_t^+\\ \dot {\widehat a}_t^-\end{pmatrix}=
\begin{pmatrix}-\cR_t^{-1}\bv_t^1\\ 1\\ 0\\0\end{pmatrix}u_1+
\begin{pmatrix}-\cR_t^{-1}\bv_t^{2}\\0\\1\\0\end{pmatrix}u_2+
\begin{pmatrix}-\cR_t^{-1}\bv_t^{3}\\0\\0\\1\end{pmatrix}u_3
\eqqcolon\sum_{i=1}^3\bg_i(\sigma_t, \widehat a_t^\pm) u_i \,,
\end{equation}
where as in the previous sections $\bv_t^1\coloneqq(\tilde\bv_t^1|w_t)$, $\bv_t^{j}\coloneqq(\tilde\bv_t^{j}|0)$, for $j=2,3$ and $\cR_t^{-1}$ is the inverse of the Grand resistance matrix.
Here, for $i=1, 2, 3$, the maps $u_i\colon[0,T]\to\R{}$ are the control functions identified with the time derivatives of the three shape parameters $\widehat{a}_t^{\pm}$ and $\sigma_t$\,.

\begin{theorem}
In the regime~\eqref{ratio}, system \eqref{217_4} is totally controllable. 
\end{theorem}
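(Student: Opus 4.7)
The plan is to apply Corollary~\ref{gait_tot} exactly as in the three preceding subsections. Inspecting \eqref{217_4}, the matrix~$F$ governing the shape equations equals the identity~$\mathbb{I}_3$, so it is constant in~$s$ and of rank $3=\dim(\cS)$. Consequently it suffices to exhibit a single shape $s^\star$ at which condition \eqref{abovecondition} holds, i.e.\ at which the projections $\Pi^G_e[\widetilde Z_i,\widetilde Z_j](s^\star)$ of suitable Lie brackets of the control fields span the three-dimensional Lie algebra $\fg=\mathfrak{se}(2)$. Gait controllability at $s^\star$ is then immediate from Theorem~\ref{th:gaitcontrollable}, and total controllability follows from the total controllability of the shape-reduced system.

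I would choose the natural symmetric point $\sigma_\star=\pi/4$ and $\widehat a^+_\star=\widehat a^-_\star=\widehat a_\star$ for some $\widehat a_\star\in(-L,0)$ (for instance $\widehat a_\star=-L/2$, mirroring the sliding links choice). At this point the Grand Resistance Matrix $\cR_\star$ simplifies considerably thanks to the left/right reflection symmetry, and $\cR_\star^{-1}$ can be computed in closed form. I would then assemble
\[
\bg_\star^4\coloneqq\Pi^G_e[\bg_1,\bg_2](s^\star),\quad \bg_\star^5\coloneqq\Pi^G_e[\bg_1,\bg_3](s^\star),\quad \bg_\star^6\coloneqq\Pi^G_e[\bg_2,\bg_3](s^\star),
\]
and show $\det(\bg_\star^4\mid\bg_\star^5\mid\bg_\star^6)\neq 0$ under \eqref{ratio}. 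By the symmetry of $s^\star$, I expect $\bg_\star^4$ and $\bg_\star^5$ to be exchanged by the reflection across the bisector (as happens in \eqref{218} and \eqref{218_grow}), while $\bg_\star^6$ supplies a transverse translation component, making the three vectors a natural candidate basis of $\fg$.

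The main obstacle is the size of the symbolic computation, which is heavier than in the \emph{growing links} case for two reasons. First, a change in~$\widehat a^\pm_t$ adds material points at the junction of the two sections rather than at the tip, so the velocity field of each inner section carries the additional term $-\dot{\widehat a}^\pm_t R_{\pm\sigma_t}\be_1$. This means $\bg_2$ and $\bg_3$ already couple at first order to $R_{\theta_t}^{-1}\dot\bh_t$ (as in the \emph{sliding links} model), and the corresponding $\widetilde{\bv}_t^{2,3}$ have non-zero $\be_1$-components. Second, the coefficients depend on $\widehat a_t^\pm$ through the quadratic and cubic combinations $(L-\widehat a_t^\pm)^{2,3}$, so that differentiating along the flows of $\bg_1,\bg_2,\bg_3$ yields lengthier polynomial expressions than in the growing case.

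I expect that, after evaluating at the symmetric~$s^\star$, the determinant of the three bracket vectors factors as a product of a power of $(C_\perp-C_\parallel)$ (reflecting the fact that length-change alone cannot break the Scallop Theorem in the isotropic drag limit) with a polynomial in $(C_\parallel,C_\perp)$ that is manifestly positive for $C_\parallel,C_\perp>0$; the hypothesis \eqref{ratio} then guarantees $C_\perp\neq C_\parallel$ and hence non-vanishing of the determinant. Should this factorisation fail to materialise at the symmetric point~$s^\star$, I would perturb~$\widehat a_\star$ or replace one of the chosen brackets by a higher-order one such as $[\bg_2,[\bg_1,\bg_2]]$, exploiting condition \eqref{abovecondition}, which allows brackets of arbitrary order in $\fh_k(s^\star)$.
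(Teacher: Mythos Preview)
Your plan is essentially the paper's proof: apply Corollary~\ref{gait_tot} after checking that the three first-order brackets $\bg^4_\star=[\bg_1,\bg_2]$, $\bg^5_\star=[\bg_1,\bg_3]$, $\bg^6_\star=[\bg_2,\bg_3]$, evaluated at a symmetric shape with $\sigma_\star=\pi/4$ and equal link extensions, span~$\fg$. Two minor discrepancies are worth flagging. First, the paper picks $\widehat a^\pm_\star=-L$ rather than $-L/2$; either is admissible, but the choice $-L$ keeps the algebra slightly cleaner. Second, and more substantively, your expectation about the determinant is too optimistic: in the paper's computation the determinant factors as $p(C_\perp,C_\parallel)\,q(\kappa)$ where $p$ carries only a single power of $(C_\perp-C_\parallel)$ and the residual polynomial $q(\kappa)$ has \emph{mixed-sign} coefficients, so it is not manifestly positive for all $C_\parallel,C_\perp>0$. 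One actually needs the full interval constraint $\kappa\in(1,2]$ from~\eqref{ratio}, not merely $C_\perp\neq C_\parallel$, to pin down the sign of~$q$. Your fallback to a perturbed point or a higher-order bracket is therefore not needed, but the sign analysis is a genuine step and not the automatic consequence you anticipate.
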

\begin{proof}
We compute the Lie brackets $\bg_t^4\coloneqq[\bg_t^1,\bg_t^2]$, and $\bg_t^5\coloneqq[\bg_t^1,\bg_t^3]$, $\bg_t^6\coloneqq[\bg_t^2,\bg_t^3]$ at the specific values $\sigma_\star=\pi/4$, $\widehat a_\star^\pm=-L$, obtaining the fields $\bg_\star^i\coloneqq \Pi^G_e(\bg_i(\sigma_\star,\widehat a_\star^\pm))$ (see~\eqref{Pi_g}), for $i=4,5,6$,
\begin{equation}\label{218_telescopic}
\begin{split}
\!\!\!\! \bg^4_\star= &\, \bigg(\displaystyle \frac{(C_\perp-C_\parallel)^2}{2\sqrt{2}(C_\perp+C_\parallel)^2},
\displaystyle\frac{2C_\perp^3+23C_\perp^2 C_\parallel+8C_\perp C_\parallel^2-16 C_\parallel^3}{\sqrt{2}(C_\parallel+C_\perp)(C_\perp+4C_\parallel)^2},
\displaystyle-\frac{3(C_\perp^3+9C_\perp^2 C_\parallel+10C_\perp C_\parallel^2+8C_\parallel^3)}{4L(C_\parallel+C_\perp)(C_\perp+4C_\parallel)^2}\bigg)^\top,\\
\!\!\!\! \bg^5_\star= &\, \bigg(
\displaystyle \frac{(C_\perp-C_\parallel)^2}{2\sqrt{2}(C_\perp+C_\parallel)^2},
\displaystyle\frac{(4C_\parallel-C_\perp)(2C_\perp^2+C_\perp C_\parallel-4C_\parallel)}{\sqrt{2}(C_\parallel+C_\perp)(C_\perp+4C_\parallel)^2},
\displaystyle-\frac{3(C_\perp^3+5C_\perp^2 C_\parallel+22C_\perp C_\parallel^2+24C_\parallel^3)}{4L(C_\parallel+C_\perp)(C_\perp+4C_\parallel)^2}\bigg)^\top,\\
\!\!\!\! \bg^6_\star = &\, \bigg(
\displaystyle -\frac{C_\parallel^2}{\sqrt{2}L(C_\parallel+C_\perp)(C_\perp+4C_\parallel)},
\displaystyle- \frac{C_\parallel(3C_\perp+2 C_\parallel)}{2\sqrt{2}(C_\parallel+C_\perp)^2(C_\perp+4C_\parallel)},
\displaystyle \frac{3C_\parallel(C_\perp- C_\parallel)}{8L^2(C_\parallel+C_\perp)(C_\perp+4C_\parallel)}\bigg)^\top.
\end{split}
\end{equation}
To apply Corollary \ref{gait_tot}, we have to verify that the vector fields $\bg^4_\star$\,, $\bg^5_\star,$\,, and $\bg^6_\star$ are linearly independent so that they generate the Lie algebra $\fg$.
An explicit computation yields
$$
\det(\bg_\star^4|\bg_\star^5|\bg_\star^6)=p(C_\perp,C_\parallel)q(C_\perp,C_\parallel)\,,
$$
with
$$
p(C_\perp,C_\parallel):=-\frac{3(C_\perp-C_\parallel)}{16L(C_\parallel+C_\perp)^5(C_\perp+4C_\parallel)^3 }
$$
and, denoting $\kappa=C_\perp/C_\parallel$,
$$
q(\kappa)\coloneqq 5\kappa^7+81\kappa^6-149\kappa^5-1149\kappa^4 -972\kappa^3+1096\kappa^2 +832 \kappa-384 \,.
$$
In the regime~\eqref{ratio}, we have that $p(C_\perp,C_\parallel)>0$ and $q(\kappa)>0$. Thus, the determinant is always different from zero and the linear independence of $\bg^3_\star$\,, $\bg^4_\star$, and $\bg^5_\star$ follows.
Therefore, condition \eqref{abovecondition} is fulfilled and the systems is gait controllable by Theorem~\ref{th:gaitcontrollable}.
Moreover note that from equation \eqref{217_4} the matrix $F$ defining the equations of the shape variables is constant and with rank $3$. Therefore we can apply Corollary \ref{gait_tot} to conclude that the system is totally controllable.
\end{proof}

\begin{remark}
The controllability results obtained in Section~\ref{sec:esempi} can be extended to the case of the $N$-link swimmer; the reasoning follows that of \cite[Theorem~4.2]{MMSZ}.
\end{remark}

%%%%%%%%%%%%%%%%%%%%%%%%%%%%%%%%%%%%%%%%%%%%%%%%%%%%%%%%%%%%%%%%%%%%%%%%%%%%%%%

\section{Numerical simulations and discussion}\label{sec_num_disc}
In this section, we show numerical simulations of the behavior of some of the brackets computed in Section~\ref{sec:esempi}, illustrating the trajectory obtained by iterating the same control loop several times.
As discussed in \cite{FPZ}, the iteration of a control loop leads to one of two alternative behaviors: either the trajectory is drifting along a certain direction, or it is quasi periodic, remaining in a compact set.
More precisely, each zero-mean control loop 
(thus inducing a gait) produces a displacement, called \emph{geometric phase} \cite{KM,MarsdenRatiu}, in the variable $g$. 
If the control loop is of the type of equations \eqref{loop} or \eqref{loopk}, then the geometric phase is well approximated by the Lie bracket associated with such loops. Thus,  iterating~$N$ times a control loop produces a shift in the position variable $g$ which is, approximately, $N$ times the Lie bracket, cf.~\cite[Eq.~(3.2)]{LibroCoron}. 
Similarly, the composition of two ore more different control loops produces a shift given by the sum of their corresponding geometric phases (or, approximately, of the associated brackets). 
In \cite{AM1997,FPZ} it has been shown that, if the position space $G$ is a non-compact Lie group (as in our case with $G= SE(2)$), the ``prevalent'' behaviour, i.e., drift or quasi-periodicity, observed iterating a control loop, can be predicted only by knowing $G$.  In our models, where $G=SE(2)$, quasi-periodicity is prevalent, meaning that, for almost every control loop, its iteration produces a bounded, quasi-periodic trajectory, while it is necessary to single out specific loops to obtain a trajectory that drifts in a certain direction.

In what follows, we illustrate numerically, for our models, gaits associated with both behaviors, providing explicit expressions of the loops. 
In all simulations, we integrate the equations of motion using the following initial conditions: $x_0=0$, $y_0=0$, $\theta_0=0$, and $\sigma_0=\pi/4$ and drag coefficients ratio $C_\perp/C_\parallel=2$ (see~\eqref{ratio}).
\begin{figure}
\centering
\begin{subfigure}{0.4\textwidth}
    \includegraphics[width=0.8\textwidth]{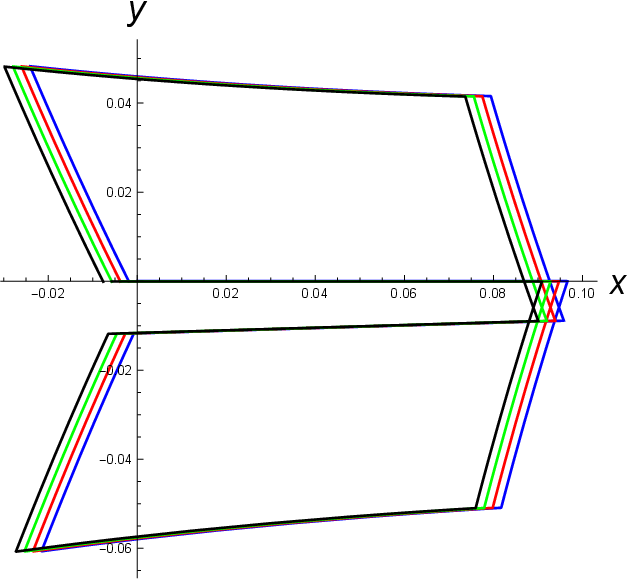}
    \caption{}
    \label{fig:first}
\end{subfigure}
\hfill
\begin{subfigure}{0.4\textwidth}
    \includegraphics[width=0.8\textwidth]{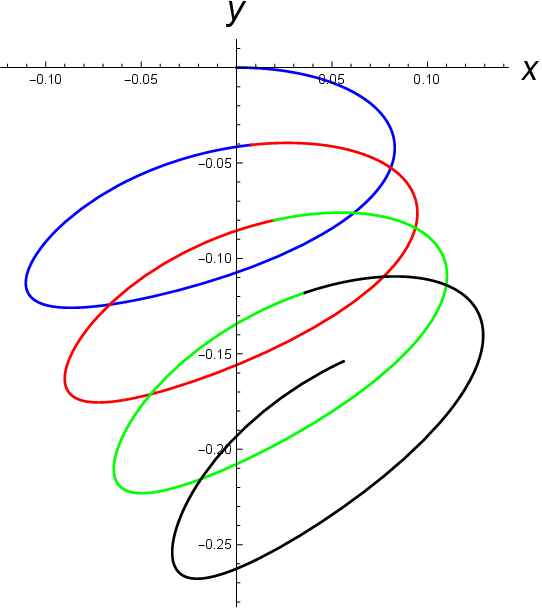}
    \caption{}
    \label{fig:second}
\end{subfigure}
\hfill
\begin{subfigure}{0.4\textwidth}
    \includegraphics[width=0.8\textwidth]{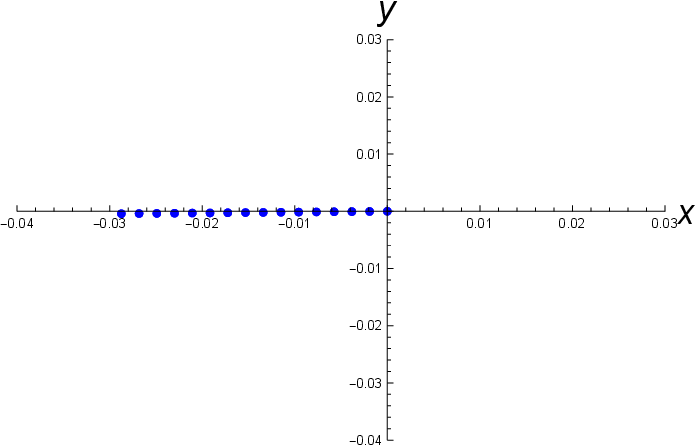}
    \caption{}
    \label{fig:third}
\end{subfigure}
        \hfill
        \begin{subfigure}{0.4\textwidth}
    \includegraphics[width=0.8\textwidth]{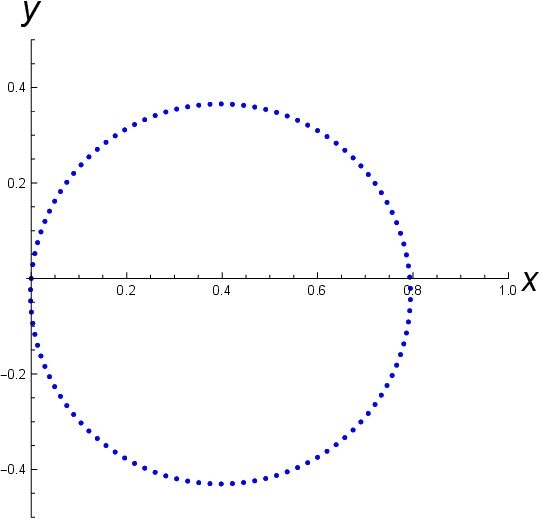}
    \caption{}
    \label{fig:fourth}
\end{subfigure}
\caption{Figure~\ref{fig:first}: trajectory in the $(x,y)$-plane obtained by iterating four times the concatenation of the two control loops  in \eqref{conc_loops}. Figure~\ref{fig:second}: trajectory in the $(x,y)$-plane obtained by iterating four times the control loop \eqref{Rot_loop}. Figure~\ref{fig:third}: $(x,y)$-displacement after each iteration of the concatenation of the two control loops in \eqref{conc_loops}. Figure~\ref{fig:fourth}: $(x,y)$-displacement after each iteration of the control loop \eqref{Rot_loop}}
\label{fig:Stretching_links}
\end{figure}

Considering the stretching links swimmer, we start with initial conditions  $\gamma^+_0=\gamma^-_0=1$ for the deformation gradient. We show in Figure~\ref{fig:first}  the trajectory in the $(x,y)$-plane obtained by iterating four times the concatenation of control loops given in \eqref{loop}; more precisely, recalling~\eqref{217_1}, we set 
\begin{equation}\label{conc_loops}
\tilde{\bu}(t)=\begin{cases}
\be_1 & \text{for $0\leq t<0.1$,}\\
\be_2 & \text{for $0.1\leq t<0.2$,}\\
-\be_1& \text{for $0.2\leq t<0.3$,}\\
-\be_2 & \text{for $0.3\leq t\leq 0.4$,}\\
\end{cases}
\qquad \hat{\bu}(t)=\begin{cases}
\be_2 & \text{for $0\leq t<0.1$,}\\
\be_3 & \text{for $0.1\leq t<0.2$,}\\
-\be_2& \text{for $0.2\leq t<0.3$,}\\
-\be_3 & \text{for $0.3\leq t\leq 0.4$,}
\end{cases}
\end{equation}
where the concatenation is the loop with period $2\tau=0.8$ given by
\begin{equation*}
\tilde{\bu}^\frown \hat{\bu}(t)\coloneqq \begin{cases}
\tilde{\bu}(t) &\text{ for $0\leq t$ mod$(0.8)\leq 0.4$,}\\
\hat{\bu}(t) &\text{ for $0.4< t$ mod$(0.8)\leq 0.8$.}
\end{cases}
\end{equation*}
The concatenation of these two loops results in a shift in the position variable $g$ equal to  $\bg_*^4+\bg_*^5$ (up to order $\tau^2$) in \eqref{218}. Plotting the $(x,y)$-displacement obtained after each iteration of the concatenation (i.e., any $2\tau$) we obtain Figure \ref{fig:third}. As we theoretically expect from the sum of the brackets $\bg_*^4+\bg_*^5$ in \eqref{218}, we obtain a translation in the horizontal direction.

To illustrate instead the quasi periodic behavior, which is the predominant one, we consider, for simplicity, the following control loop
\begin{equation}\label{Rot_loop}
\bu(t)=\begin{pmatrix}
0.1\cos(t)\\
0.1\sin(t)
\end{pmatrix}
\end{equation}
The trajectory in the $(x,y)$-plane obtained iterating four times this loop is shown in Figure~\ref{fig:second}, while Figure~\ref{fig:fourth} illustrates the $(x,y)$-displacement obtained after each iteration, producing a quasi periodic trajectory, as expected.

For the other examples, the strategy is similar: to obtain the drifting behavior, we look for special control loops among the ones that generate a Lie bracket whose rotation component is zero.
For the case of sliding links, recalling~\eqref{217_2}, we obtain a translation along the  $x$-axis by iterating a control loop of the form~\eqref{loopk} associated with the bracket~$\bg_*^6$ in~\eqref{218_slide}. 
In Figure~\ref {fig:slide1}, we plot the $(x,y)$-displacement after each iteration. 
The small deviation from the $x$-axis is due to higher order effects: indeed the bracket $\bg_*^6$ is only an approximation of the geometric phase. 
For the case of growing links, recalling~\eqref{217_3}, the iterated concatenation of two control loops of the form \eqref{loop} associated with the Lie brackets~$\bg_*^4$ and~$\bg_*^5$ in~\eqref{218_grow} leads to a translation along the horizontal axis (see Figure~\ref{fig:grow1}). 
Finally, for the telescopic case, since the brackets $\bg_*^4$,~$\bg_*^5$, and~$\bg_*^6$ in~\eqref{218_telescopic} are linearly independent, it is possible to find a suitable concatenation of them which results in a drifting behavior.

In all examples, we can easily obtain the quasi-periodic trajectory  since this is the prevalent type, for example taking the control loop \eqref{Rot_loop} (see Figures \ref{fig:slide2} and \ref{fig:grow2}).
\begin{figure}
\centering
\begin{subfigure}{0.4\textwidth}
    \includegraphics[width=0.8\textwidth]{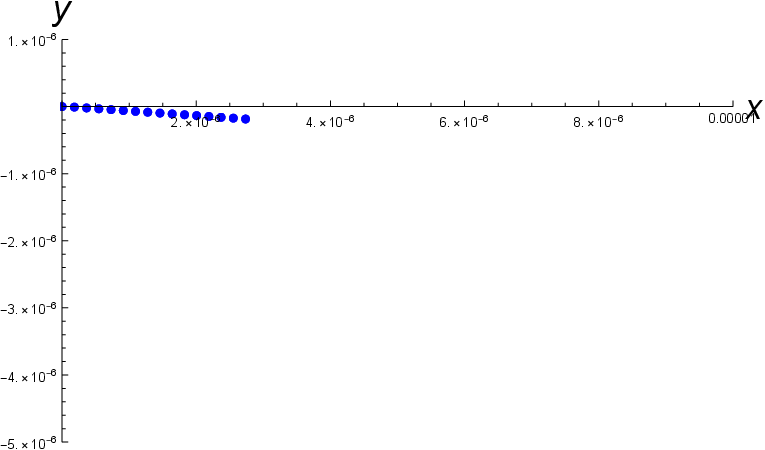}
    \caption{}
    \label{fig:slide1}
\end{subfigure}
\hfill
\begin{subfigure}{0.4\textwidth}
    \includegraphics[width=0.8\textwidth]{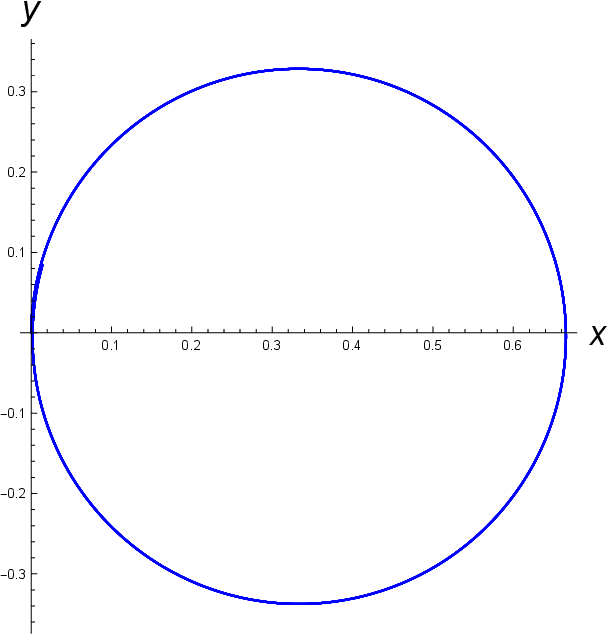}
    \caption{}
    \label{fig:slide2}
\end{subfigure}
\caption{Iteration of control loops in the sliding case. The integration of equations \eqref{217_2} is performed with  $a(0)=-1/2$. Figure~\ref{fig:slide1}: $(x,y)$-displacement after each iteration of the control loop \eqref{loopk} with $A=B=1$ and $\tau=0.04$. Figure~\ref{fig:slide2}: $(x,y)$ displacement after each iteration of the control loop \eqref{Rot_loop}.}
\label{fig:Sliding_links}
\end{figure}
\begin{figure}
\centering
\begin{subfigure}{0.4\textwidth}
    \includegraphics[width=0.8\textwidth]{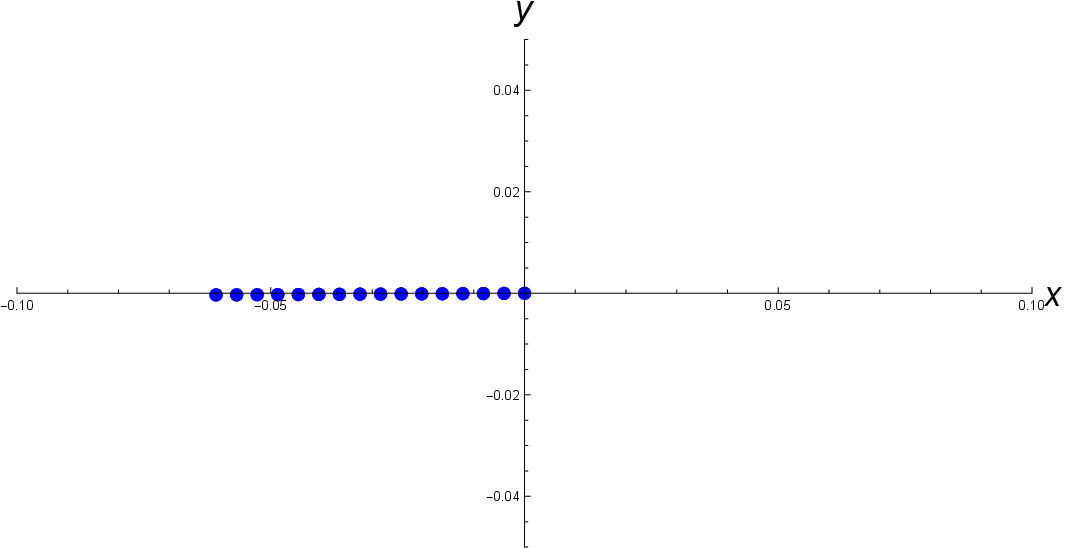}
    \caption{}
    \label{fig:grow1}
\end{subfigure}
\hfill
\begin{subfigure}{0.4\textwidth}
    \includegraphics[width=0.8\textwidth]{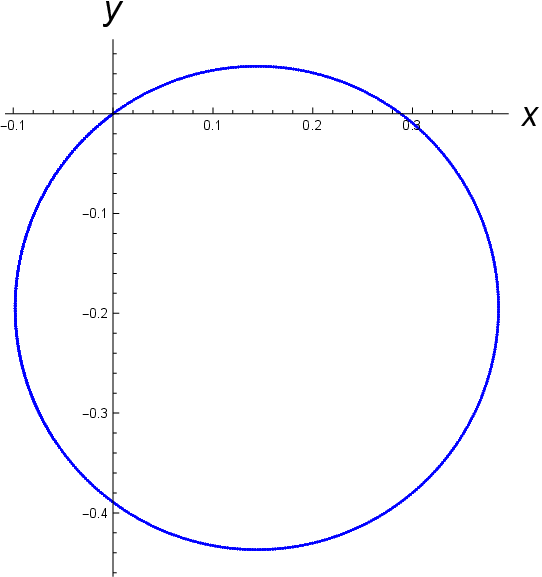}
    \caption{}
    \label{fig:grow2}
\end{subfigure}
\caption{Iteration of control loops in the growing case. The integration of equations \eqref{217_3} is performed with $b^\pm(0)=1$. Figure~\ref{fig:grow1}: $(x,y)$-displacement after each iteration of the concatenation of the two control loops in \eqref{conc_loops}. Figure~\ref{fig:grow2}: $(x,y)$-displacement after each iteration of the control loop \eqref{Rot_loop}.}
\label{fig:Growing_links}
\end{figure}

\subsection*{Acknowledgements} 
PG and MZ are members of the \emph{Gruppo Nazionale di Fisica Matematica} of the \emph{Istituto Nazionale di Alta Matematica}. MM is a member of the \emph{Gruppo Nazionale per l'Analisi Matematica, la Probabilit\`{a} e le loro Applicazioni} of the \emph{Istituto Nazionale di Alta Matematica}.

PG acknowledges the GAČR Junior Star Grant 21-09732M.
This study was carried out within the \emph{Mathematics for Industry 4.0} 2020F3NCPX PRIN2020 (MM and MZ) funded by the Italian MUR,  the  \emph{Geometric-Analytic Methods for PDEs and Applications} 2022SLTHCE (MM) and the \emph{Innovative multiscale approaches, possibly based on Fractional Calculus, for the effective constitutive modeling of cell mechanics, engineered tissues, and metamaterials in Biomedicine and related fields} P2022KHFNB (MZ) projects funded by the European Union -- Next Generation EU  within the PRIN 2022 PNRR program (D.D. 104 - 02/02/2022). 
This manuscript reflects only the authors’ views and opinions and the Ministry cannot be considered responsible for them.

\bibliographystyle{plain}

\end{document}